\newtheorem{proposition}{Proposition}
\newtheorem{theorem}{Theorem}
\newtheorem{lemma}{Lemma}
\newtheorem{example}{Example}
\newtheorem{remark}{Remark}
\newtheorem{problem}{Problem}
\renewcommand{\S}{\mathbf{H}}
\newcommand{\psd}{\succeq}
\newcommand{\nsd}{\preceq}
\newcommand{\pd}{\succ}
\newcommand{\RR}{\mathbb{R}}
\newcommand{\CC}{\mathbb{C}}
\newcommand{\tr}{\textup{tr}}
\newcommand{\hypo}{\textup{hyp}}
\newcommand{\epi}{\textup{epi}}
\DeclareMathOperator{\vecm}{vec}
\title{Lieb's concavity theorem, matrix geometric means,\\ and semidefinite optimization}
\author{Hamza Fawzi\thanks{
Department of Applied Mathematics and Theoretical Physics, University of Cambridge, UK. Email: \texttt{h.fawzi@damtp.cam.ac.uk}
    } \and James Saunderson\thanks{Department of Electrical and Computer Systems Engineering, 
	Monash University, VIC 3800, Australia. Email: \texttt{james.saunderson@monash.edu}}}
\date{April 13, 2020}
\begin{document}
\maketitle

\begin{abstract}
A famous result of Lieb establishes that the map $(A,B) \mapsto \tr\left[K^* A^{1-t} K B^t\right]$ is jointly concave in the pair $(A,B)$ of positive definite matrices, where $K$ is a fixed matrix and $t \in [0,1]$. In this paper we show that Lieb's function admits an explicit semidefinite programming formulation for any rational $t \in [0,1]$.
Our construction makes use of a semidefinite formulation of weighted matrix geometric means.  We provide an implementation of our constructions in Matlab.
\end{abstract}

\noindent \textbf{Keywords:} Matrix convexity; Semidefinite optimization; Linear matrix inequalities; Lieb's concavity theorem; Matrix geometric means\\[0.5cm]
\noindent \textbf{AMS Subject Classification:} 90C22; 47A63; 81P45


\section{Introduction}
\label{sec:introduction}

In 1973 Lieb \cite{lieb1973convex} proved the following fundamental theorem.
\begin{theorem}[Lieb]
\label{thm:lieb}
Let $K$ be a fixed matrix in $\CC^{n\times m}$. Then for any $t \in [0,1]$, the map
\begin{equation}
\label{eq:liebmap}
(A,B) \mapsto \tr\left[K^* A^{1-t} K B^t\right]
\end{equation}
is jointly concave in $(A,B)$ where $A$ and $B$ are respectively $n\times n$ and $m\times m$ Hermitian positive definite matrices.
\end{theorem}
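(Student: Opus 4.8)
The plan is to reduce Lieb's trace inequality to an \emph{operator} concavity statement, and then to recognize that operator statement as an instance of the joint concavity of a weighted matrix geometric mean. First I would rewrite the trace functional using vectorization and the Kronecker product. Writing $v = \vecm(K) \in \CC^{nm}$ and using the identities $\tr[X^* Y] = \vecm(X)^* \vecm(Y)$ and $\vecm(MKN) = (N^T \otimes M)\vecm(K)$ with $M = A^{1-t}$ and $N = B^t$, one obtains
\[
\tr\left[K^* A^{1-t} K B^t\right] = v^*\left(\bar{B}^{t} \otimes A^{1-t}\right) v,
\]
where $\bar B$ denotes the entrywise conjugate of $B$ (so that $N^T = (B^T)^t = \bar{B}^{t}$ since $B$ is Hermitian). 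Because $B \mapsto \bar B$ is a real-linear bijection of the positive definite cone onto itself, and because reordering the two tensor factors is a fixed unitary conjugation (hence $v^*(\bar{B}^t \otimes A^{1-t})v = w^*(A^{1-t}\otimes \bar{B}^t)w$ for $w$ a permuted copy of $v$), it suffices to prove that the matrix-valued map $(A,B) \mapsto A^{1-t} \otimes B^t$ is jointly concave with respect to the Löwner order. Sandwiching such an operator inequality between $v^*(\cdot)v$ then yields the scalar concavity of \eqref{eq:liebmap} for the given $K$.

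Next I would exhibit this tensor map as a weighted geometric mean. Recall the weighted matrix geometric mean $X \#_t Y = X^{1/2}(X^{-1/2} Y X^{-1/2})^t X^{1/2}$ for positive definite $X,Y$ and $t \in [0,1]$. The key observation is that $A \otimes I$ and $I \otimes B$ are \emph{commuting} positive definite operators on $\CC^n \otimes \CC^m$, so their weighted geometric mean collapses to a product of commuting powers:
\[
(A \otimes I) \#_t (I \otimes B) = (A \otimes I)^{1-t}(I \otimes B)^t = A^{1-t} \otimes B^t.
\]
Thus $(A,B) \mapsto A^{1-t} \otimes B^t$ is the composition of the \emph{linear} map $(A,B) \mapsto (A \otimes I,\, I \otimes B)$ with the weighted geometric mean $\#_t$. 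Since composing a jointly concave map with a linear map preserves joint concavity, the desired operator concavity follows once we know that $(X,Y) \mapsto X \#_t Y$ is jointly operator concave.

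The crux of the argument, and the step I expect to be the main obstacle, is therefore the joint operator concavity of the weighted geometric mean $\#_t$. For $t = 1/2$ this is classical and admits a clean proof via the maximal semidefinite characterization
\[
X \# Y = \max\left\{ Z = Z^* : \begin{pmatrix} X & Z \\ Z & Y \end{pmatrix} \psd 0 \right\},
\]
from which joint concavity follows by observing that a convex combination of feasible blocks is feasible for the averaged arguments. For general rational $t \in [0,1]$ one builds $\#_t$ from repeated applications of $\#_{1/2}$ together with the trivial means, exploiting the dyadic structure of $t$, with each step preserving joint concavity and admitting a semidefinite description; this is precisely the machinery of weighted matrix geometric means that the remainder of the paper develops, so I would invoke those constructions here rather than reprove concavity from scratch. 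Assembling the vectorization identity, the commuting-operators collapse, and the joint concavity of $\#_t$ then completes the proof.
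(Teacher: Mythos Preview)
Your proposal is correct and matches precisely the approach the paper points to. The paper does not give a standalone proof of Theorem~\ref{thm:lieb} (it is stated as Lieb's result), but it explicitly notes that the joint operator concavity of the weighted geometric mean yields a simple proof, and the two key identities you use---the Ando vectorization identity $\tr[K^* A^{1-t} K B^t] = \vecm(K)^*(A^{1-t}\otimes \bar B^{\,t})\vecm(K)$ and the commuting-tensor collapse $(A\otimes I)\#_t(I\otimes \bar B)=A^{1-t}\otimes \bar B^{\,t}$---are exactly equations~\eqref{eq:andotensorid} and~\eqref{eq:krongeomeanid} in the proof of Theorem~\ref{thm:main-lieb}. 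The only point to add is that your argument for the concavity of $\#_t$ via the dyadic/SDP construction covers rational $t$; the extension to all $t\in[0,1]$ follows by continuity (or, as the paper cites, directly from Kubo--Ando theory).
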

This theorem plays a fundamental role in quantum information theory and was used for example to establish convexity of the quantum relative entropy as well as strong subadditivity \cite{liebruskaiproofssa}.
In this paper we give an explicit representation of Lieb's function using semidefinite programming when $t$ is a rational number. More precisely we prove:
\begin{theorem}
\label{thm:main-lieb-intro}
Let $K$ be a fixed matrix in $\CC^{n\times m}$ and let $t = p/q$ be any rational number in $[0,1]$. Then the convex set
\[
\left\{ (A,B,\tau) : \tr\left[K^* A^{1-t} K B^t\right] \geq \tau \right\}
\]
has a semidefinite programming representation with at most $2 \lfloor \log_2 q\rfloor + 3$ linear matrix inequalities of size at most $2nm\times 2nm$.
\end{theorem}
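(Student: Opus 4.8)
The plan is to reduce Lieb's functional to a single weighted matrix geometric mean by vectorization, and then to represent that geometric mean by linear matrix inequalities. Writing $k=\vecm(K)\in\CC^{nm}$ and using $\tr[K^*M]=k^*\vecm(M)$ together with $\vecm(PXQ)=(Q^T\otimes P)\vecm(X)$, I would first rewrite
\[
\tr\!\left[K^*A^{1-t}KB^t\right]=k^*\big((B^T)^t\otimes A^{1-t}\big)k=k^*\,\mathbf A^{1-t}\mathbf B^t\,k,
\]
where $\mathbf A:=I_m\otimes A$ and $\mathbf B:=B^T\otimes I_n$ are positive definite $nm\times nm$ matrices depending linearly on $A$ and $B$. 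The crucial structural feature is that $\mathbf A$ and $\mathbf B$ \emph{commute}, so $\mathbf A^{1-t}\mathbf B^t$ is exactly their weighted geometric mean $\mathbf A\#_t\mathbf B$. Hence Lieb's set is the projection onto $(A,B,\tau)$ of the lifted set
\[
\big\{(A,B,\tau,T):\ T=T^*,\ T\nsd \mathbf A\#_t\mathbf B,\ k^*Tk\ge\tau\big\},
\]
since one may always take $T=\mathbf A\#_t\mathbf B$, while $T\nsd \mathbf A\#_t\mathbf B$ forces $k^*Tk\le k^*(\mathbf A\#_t\mathbf B)k$; as projections of spectrahedral shadows remain semidefinite-representable, this isolates the real task: an SDP representation of the weighted geometric-mean hypograph $\{(\mathbf A,\mathbf B,T):T\nsd \mathbf A\#_t\mathbf B\}$.

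For the geometric-mean hypograph I would build on two facts. The base case $t=\tfrac12$ is the classical Schur-complement identity: for Hermitian $T$,
\[
T\nsd \mathbf A\#_{1/2}\mathbf B\quad\Longleftrightarrow\quad \begin{pmatrix}\mathbf A & T\\ T & \mathbf B\end{pmatrix}\psd 0,
\]
a single $2nm\times2nm$ LMI. The inductive glue is the midpoint identity $(\mathbf A\#_s\mathbf B)\#_{1/2}(\mathbf A\#_u\mathbf B)=\mathbf A\#_{(s+u)/2}\mathbf B$, which follows from congruence-covariance of the geometric mean together with the fact that powers of the single matrix $\mathbf A^{-1/2}\mathbf B\mathbf A^{-1/2}$ commute. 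Iterating this identity from the endpoints $\mathbf A\#_0\mathbf B=\mathbf A$ and $\mathbf A\#_1\mathbf B=\mathbf B$ reaches every dyadic weight, each step costing one auxiliary matrix and one $2nm\times2nm$ LMI.

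The step I expect to be the genuine obstacle is reaching a non-dyadic weight $t=p/q$ exactly while keeping the number of LMIs logarithmic in $q$: pure iteration of the midpoint identity produces only dyadic rationals, and a denominator such as $q=3$ is out of reach by averaging alone. To overcome this I would transcribe the Ben-Tal--Nemirovski representation of the scalar power $x\mapsto x^{p/q}$ into the matrix setting, using the $\#_{1/2}$ Schur LMI as the elementary gate in place of a rotated second-order cone. Concretely, I would encode the integers $p$ and $q$ in binary and assemble the required weight through a recursion whose depth is governed by the bit-length of $q$; verifying that this produces $\mathbf A\#_{p/q}\mathbf B$ on the nose, rather than an approximation, and that the two binary expansions together contribute at most $2\lfloor\log_2 q\rfloor$ matrix inequalities is the technical crux. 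Granting this, the assembly is routine: the geometric-mean hypograph contributes the $2\lfloor\log_2 q\rfloor$ LMIs of size $2nm\times2nm$, while the reduction of the first paragraph adds the scalar inequality $k^*Tk\ge\tau$ and the domain constraints $A,B\pd0$, and collecting these yields a representation with at most $2\lfloor\log_2 q\rfloor+3$ linear matrix inequalities of size at most $2nm\times2nm$, as claimed.
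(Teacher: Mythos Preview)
Your approach is essentially the paper's: vectorize to write $\tr[K^*A^{1-t}KB^t]=k^*(\mathbf A\#_t\mathbf B)k$ with commuting Kronecker factors $\mathbf A,\mathbf B$, then invoke an SDP representation of the weighted matrix geometric mean---which the paper likewise defers to a separate theorem and appendix, matching your acknowledged ``technical crux.'' One minor bookkeeping correction: the ``$+3$'' in the final count arises because the geometric-mean hypograph itself costs $2\lfloor\log_2 q\rfloor+1$ LMIs of size $2nm$ plus one of size $nm$, to which the scalar inequality $k^*Tk\ge\tau$ is then added; it does not come from separate domain constraints $A,B\succ 0$, which are open conditions rather than LMIs.
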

Semidefinite programming is a class of convex optimization problems that can be solved in polynomial-time and that is supported by many existing numerical software packages. Having a semidefinite programming formulation of a function allows us to combine it with a wide family of other semidefinite representable functions and constraints, and solve the resulting problem to global optimality. In fact we have implemented our constructions in the Matlab-based modeling language CVX \cite{cvx} and we are making them available online on the webpage \begin{center}\url{http://www.damtp.cam.ac.uk/user/hf323/lieb_cvx.html}.\end{center}

\paragraph{Matrix geometric means} Our proof of Theorem \ref{thm:main-lieb-intro} relies crucially on the notion of \emph{matrix geometric mean}. Given $t \in [0,1]$ and positive definite matrices $A$ and $B$, the $t$-\emph{weighted matrix geometric mean} of $A$ and $B$ denoted interchangeably by $G_t(A,B)$ or $A \#_t B$ is defined as:
\begin{equation}
\label{eq:defmatrixgeomean}
G_t(A,B) = A \#_t B := A^{1/2}\left(A^{-1/2}BA^{-1/2}\right)^t A^{1/2}.
\end{equation}
Note that when $A$ and $B$ are scalars (or commuting matrices) this formula reduces to the simpler expression $A^{1-t} B^t$. Equation \eqref{eq:defmatrixgeomean} constitutes a generalization of the geometric mean to noncommuting matrices and satisfies many of the properties that are expected from a mean operation \cite{kubo1980means,bhatiaPsdMatrices}. One remarkable property of the matrix geometric mean is that it is \emph{matrix concave}: if $t \in [0,1]$, then for any pair $X = (A_1,B_1)$ and $Y = (A_2,B_2)$ we have:
\[
G_t\left( \frac{X + Y}{2} \right) \succeq \frac{1}{2} (G_t(X) + G_t(Y))
\]
where $\succeq$ indicates the L\"{o}wner partial order on Hermitian matrices (i.e., $A \succeq B \Leftrightarrow A-B$ positive semidefinite).
This fact can be used to give a simple proof of Lieb's concavity theorem, see e.g., \cite{simplestprooflieb}. The matrix geometric mean was recently shown in \cite{sagnol2013semidefinite} to have a semidefinite programming formulation. More precisely Sagnol showed that for any rational $t=p/q \in [0,1]$ the convex set
\begin{equation}
\hypo_t := 
\left\{ (A,B,T) \in \S^n_{++} \times \S^n_{++} \times \S^n : G_t(A,B) \succeq T \right\}
\end{equation}
has a semidefinite programming representation with at most $O(\log_{2}(q))$ linear matrix inequalities of size $2n\times 2n$.
In this paper we show how an SDP representation of the matrix geometric mean can be used to get an SDP representation of Lieb's function as well as numerous other convex/concave functions. Table \ref{tbl:functions} summarizes the functions we consider in this paper, together with the size of the representations.

We only became aware of the result by Sagnol \cite{sagnol2013semidefinite}
after the first preprint of this paper appeared. As such, our alternative
approach to constructing an SDP description of the matrix geometric mean, and a
proof of its correctness, is included in Appendix~\ref{app:construction} of
this paper.  Our construction has the same size (Theorem
\ref{thm:mgeomean-sdp}) as Sagnol's, and extends to the regime $t \in [-1,0]
\cup [1,2]$ for which $G_t$ is matrix convex. Furthermore, our code, which is
available online, is based on the construction in
Appendix~\ref{app:construction}.

\begin{table}[ht]
\centering
{\small
\begin{tabular}{p{5cm}p{5.5cm}p{5cm}}
\toprule
Function & Properties & Size of SDP description ($t=p/q$)\\ 
\midrule
Matrix geometric mean\newline
$(A,B) \mapsto A \#_t B$
&
matrix concave for $t \in [0,1]$ \newline
matrix convex for $t \in [-1,0]\cup [1,2]$
&
$O(\log_2 q)$ LMIs of size $2n$\newline
(Theorem \ref{thm:mgeomean-sdp})\newline
See also \cite{sagnol2013semidefinite}.
\\[0.8cm]

Lieb-Ando function\newline
$(A,B) \mapsto \tr\left[K^* A^{1-t} K B^t\right]$\newline
\footnotesize{($K \in \CC^{n\times m}$ fixed)}
&
concave for $t \in [0,1]$ \newline
convex for $t \in [-1,0]\cup [1,2]$
&
$O(\log_2 q)$ LMIs of size $2nm$\newline
(Theorem \ref{thm:main-lieb})\\[1.15cm]

$A\mapsto \tr\left[(K^* A^t K)^{1/t} \right]$\newline
\footnotesize{($K \in \CC^{n\times m}$ fixed)}
&
concave for $t \in [-1,1]\setminus \{0\}$\newline
convex for $t \in [1,2]$
&
$O(\log_2 q)$ LMIs of size $2nm$ \newline
(Theorem \ref{thm:CarlenLieb-LMI})\\[1.15cm]

Tsallis entropy\newline
$A \mapsto \frac{1}{t} \tr\left[A^{1-t} - A\right]$
&
concave for $t \in [0,1]$\newline
converges to von Neumann entropy $S(A)$ when $t \rightarrow 0$
&
$O(\log_2 q)$ LMIs of size $2n$\newline
(Remark \ref{rem:tsallisentr})
\\[1.15cm]

Tsallis relative entropy\newline
$(A,B) \mapsto \frac{1}{t} \tr\left[A - A^{1-t} B^t\right]$
&
convex for $t \in [0,1]$\newline
converges to relative entropy $S(A\|B)$ when $t \rightarrow 0$
&
$O(\log_2 q)$ LMIs of size $2n^2$\newline
(Remark \ref{rem:tsallisentr})
\\


\bottomrule
\end{tabular}  
}
\label{tbl:functions}
\caption{List of functions with SDP formulations considered in this paper.}
\end{table}

\paragraph{Implications for quantum relative entropy and related functions}
Our results can be used to solve, approximately, \emph{quantum relative entropy programs} \cite{chandrasekaran2013conic} using semidefinite programming. The quantum relative entropy function is defined as:
\[
S(A\|B) = \tr\left[A(\log A - \log B)\right]
\]
where $A$ and $B$ are positive definite matrices. 
 It is a simple corollary of Lieb's theorem that $S$ is jointly convex in $(A,B)$. Indeed this follows from observing that:
\begin{equation}
\label{eq:relentlim}
S(A\|B) = \lim_{t\rightarrow 0^+} \frac{1}{t} \tr\left[A - A^{1-t} B^t\right]
\end{equation}
where we used the fact that for any matrix $X \succ 0$:
\[
\log X = \lim_{t\rightarrow 0} \frac{1}{t} (X^t - I).
\]
Identity \eqref{eq:relentlim} together with the semidefinite programming representation of Lieb's function can be used to get SDP approximations of the relative entropy function $S(A\|B)$ to arbitrary accuracy, by choosing $t$ small enough. 
Unfortunately however, the convergence of $S_t(A\|B) := \frac{1}{t}\tr\left[A - A^{1-t} B^t\right]$ to $S(A\|B)$ is slow (it is in $O(t)$) and obtaining decent approximations of $S(A\|B)$ thus requires to use very small values of $t$. While the size of the SDP descriptions of $S_t(A\|B)$ grows only like $\log(1/t)$, we observed that standard numerical algorithms to solve these SDPs become numerically ill-conditioned as $t$ gets close to 0. There exist however other methods to obtain approximations of $S(A\|B)$ that converge much faster and are better behaved numerically and these methods are discussed in \cite{logapprox}.

\paragraph{Related works} It is well-known that the scalar functions $(x,y) \mapsto x^{1-t} y^t$ admit second-order cone representations when $t$ is a rational number \cite[Chapter 3]{ben2001lectures}. The SDP representation of the matrix geometric mean can be seen as a matrix generalization of such results.
The authors of \cite{helton2015free} give a free semidefinite representations of the matrix power functions $X\mapsto X^t$ for rational $t \in [-1,2]$, however it seems that they were not aware of the paper by Sagnol \cite{sagnol2013semidefinite} since such a representation already appears in this work. Furthermore the construction in \cite{sagnol2013semidefinite} is in some cases smaller than \cite{helton2015free}: for general rational $t=p/q \in [0,1]$ the construction in \cite{sagnol2013semidefinite} has size $O(\log_2(q))$ whereas in some cases the construction in \cite{helton2015free} requires a number of LMIs that grows linearly with $q$.
The authors of \cite{helton2015free} also mentioned that certain multivariate versions of the matrix power function fail to have semidefinite representations. Working in the setting of geometric means, and then tensor products, seems to give one natural extension to the multivariate case (see Remark \ref{rem:extension_lieb}).

\paragraph{Outline} In Section~\ref{sec:preliminaries} we set up the basic notations and terminology for the paper and in Section~\ref{sec:sdp_representations} we prove the main results of the paper giving SDP representations of the functions given in Table~\ref{tbl:functions}.

\section{Preliminaries}
\label{sec:preliminaries}

In this section we introduce basic notation and terminology used throughout the paper.
Let $\S^n$ be the space of $n\times n$ Hermitian matrices, $\S_+^n\subset \S^n$ the cone of $n\times n$ Hermitian positive semidefinite matrices and $\S_{++}^n$ the 
cone of $n\times n$ strictly positive definite matrices.
We use the notation $X \psd Y$ if $X-Y$ is positive semidefinite, and $X \pd Y$ if $X-Y$ is positive definite.
Suppose $C$ is a convex set and $f:C\rightarrow \S^n$. We say that $f$ is \emph{$\S_+^n$-convex} if the $\S_+^n$-epigraph
\[ \epi_{\S_+^n}(f) := \{(X,T)\in C\times \S^n: f(X) \nsd T\}\]
is a convex set. Similarly $f$ is \emph{$\S_+^n$-concave} if the $\S_+^n$-hypograph
\[ \hypo_{\S_+^n}(f) := \{(X,T) \in C\times \S^n: f(X) \psd T\}\]
is a convex set.

\paragraph{Semidefinite representations} A semidefinite program is an optimization problem that takes the form
\[
\begin{array}{ll}
\text{maximize} & \langle b, y \rangle\\
\text{subject to} & A_0 + y_1 A_1 + \dots + y_n A_n \succeq 0
\end{array}
\]
where $y \in \RR^n$ is the optimization variable, $b$ is a fixed vector in $\RR^n$ and $A_0,A_1,\dots,A_n \in \S^m$ are fixed $m\times m$ Hermitian matrices. The condition
\[ A_0 + y_1 A_1 + \dots + y_n A_n \succeq 0 \]
is known as a \emph{linear matrix inequality} (LMI) of size $m$. We will say that a convex set $C$ has a \emph{SDP representation} if it can be expressed using LMIs (we allow for lifting variables). To evaluate the size of a semidefinite representation we record the number of LMIs of each size. For example consider the following convex set $H$:
\[ H = \{ (x_1,x_2,x_3) \in \RR^3 \; : \; x_1,x_2,x_3 \geq 0\text{ and } x_1 x_2 x_3 \geq 1\}. \]
One can show that $H$ admits the following SDP representation:
\begin{equation}
\label{eq:Hlift}
\begin{aligned}
H &= \left\{ (x_1,x_2,x_3) \in \RR^3 \; : \; \exists y,z  \quad \text{ s.t. } \quad
\begin{bmatrix} x_1 & y\\ y & x_2 \end{bmatrix} \succeq 0, \;
\begin{bmatrix} x_3 & z\\ z & 1 \end{bmatrix} \succeq 0, \;
\begin{bmatrix} y & 1\\ 1 & z \end{bmatrix} \succeq 0
\right\}.
\end{aligned}
\end{equation}
This SDP representation consists of 3 LMIs of size 2 each.

\paragraph{Kronecker products and their properties} If $A \in \CC^{m\times n}$ we denote by $A^* \in \CC^{n\times m}$ the conjugate transpose of $A$.
The \emph{Kronecker product} of $A\in \CC^{m_1\times n_1}$ and $B\in \CC^{m_2\times n_2}$ is the $\CC^{m_1m_2\times n_1n_2}$ matrix $A \otimes B$ with 
\[ [A\otimes B]_{(i,k)(j,\ell)} = A_{ij}B_{k\ell}\qquad\text{for $1\leq i\leq n_1, 1\leq j \leq n_2, 1\leq k \leq m_1, 1\leq \ell\leq m_2$}.\]
If $A,B,C,D$ are matrices of compatible dimensions then $(A \otimes B)(C \otimes D) = (AC\otimes BD)$ and $(A \otimes B)^* = A^* \otimes B^*$. 
Suppose $A\in \S^n$ and $B\in \S^m$ are Hermitian matrices with eigenvalue decompositions $A = U\Lambda_A U^*$ and $B = V \Lambda_B V^*$ where $U,V$ are unitary
matrices and $\Lambda_A$ and $\Lambda_B$ are diagonal. Then $U\otimes V$ is unitary and $\Lambda_A \otimes \Lambda_B$ is diagonal and so
\[ A \otimes B = (U\otimes V) (\Lambda_A\otimes \Lambda_B) (U \otimes V)^*\]
is an eigenvalue decomposition of $A\otimes B$.

\section{SDP representations}
\label{sec:sdp_representations}

This is the main section of the paper where we describe the SDP representations of the various functions in Table \ref{tbl:functions}.

\subsection{Matrix geometric mean}

We first consider the SDP representation of the matrix geometric mean. Recall that the \emph{$t$-weighted geometric mean} $G_{t}: \S_{++}^n\times \S_{++}^n\rightarrow \S_{++}^n$ is defined by
\[ G_{t}(A,B) = A \#_t B := A^{1/2}\left(A^{-1/2}BA^{-1/2}\right)^tA^{1/2}.\]
It is known \cite{bhatiaPsdMatrices} that $G_t$ is matrix concave for $t \in [0,1]$ and is matrix convex for $t \in [-1,0] \cup [1,2]$. We denote by $\hypo_t$ and $\epi_t$ the matrix hypograph and matrix epigraph of $G_t$ respectively:
\[
\hypo_{t} = \left\{ (A,B,T) \in \S^n_{++} \times \S^n_{++} \times \S^n : A \#_t B \succeq T \right\}
\]
for $t \in [0,1]$, and
\[
\epi_t = \left\{ (A,B,T) \in \S^n_{++} \times \S^n_{++} \times \S^n : A \#_t B \preceq T \right\}
\]
for $t \in [-1,0]\cup[1,2]$. These notations do not keep track of the dimension $n$ explicitly but this omission should not cause any confusion.

The next theorem shows that the matrix geometric mean $G_t$ for rational $t=p/q$ admits an SDP formulation involving $O(\log_2 q)$ LMIs of size at most $2n \times 2n$.
The case $t \in [0,1]$ was already obtained by Sagnol \cite{sagnol2013semidefinite}.
In Appendix~\ref{app:construction}, we explicitly describe our construction (on which our CVX code is based) and establish its correctness.



\begin{theorem}
\label{thm:mgeomean-sdp}
Let $p,q$ be relatively prime integers with $p/q \in [-1,2]$.
\begin{itemize}
\item If $p/q \in [0,1]$ then $\hypo_{p/q}$ has a SDP description with at most $2\lfloor\log_{2}(q)\rfloor+1$ LMIs of size $2n\times 2n$ and one LMI of size $n\times n$.
\item If $p/q \in [-1,0]\cup [1,2]$ then $\epi_{p/q}$ has a SDP description with at most $2\lfloor\log_{2}(q)\rfloor+2$ LMIs of size $2n\times 2n$ and one LMI of size $n\times n$.
\end{itemize}
\end{theorem}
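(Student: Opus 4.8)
The plan is to build the representation from a single basic gadget for the case $t=1/2$ and then to propagate it through the composition identities satisfied by the weighted geometric mean. \textbf{Base case.} First I would show that $\hypo_{1/2}$ is captured by one $2n\times 2n$ linear matrix inequality together with one auxiliary constraint, namely $\hypo_{1/2}=\{(A,B,T):\exists\,X=X^*,\ X\succeq T,\ \left[\begin{smallmatrix}A & X\\ X& B\end{smallmatrix}\right]\succeq 0\}$. The key input is the variational characterization $A\#_{1/2}B=\max\{X=X^*:\left[\begin{smallmatrix}A & X\\ X& B\end{smallmatrix}\right]\succeq 0\}$, the maximum being in the Löwner order. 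Taking Schur complements and conjugating by $A^{-1/2}$, the block inequality is equivalent to $Y^2\preceq C$ with $Y=A^{-1/2}XA^{-1/2}$ and $C=A^{-1/2}BA^{-1/2}$; since $x\mapsto x^{1/2}$ is operator monotone, $Y^2\preceq C$ forces $Y\preceq |Y|=(Y^2)^{1/2}\preceq C^{1/2}$, i.e. $X\preceq A\#_{1/2}B$, while $X=A\#_{1/2}B$ is itself feasible. The slack inequality $X\succeq T$ is exactly the single $n\times n$ LMI appearing in the theorem.

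\textbf{Composition and the dyadic case.} Next I would record two identities that follow from the definition and the congruence-covariance $G_t(MXM^*,MYM^*)=M\,G_t(X,Y)\,M^*$: (i) $A\#_\alpha(A\#_\beta B)=A\#_{\alpha\beta}B$, and (ii) $(A\#_s B)\#_{1/2}(A\#_{s'}B)=A\#_{(s+s')/2}B$. Combined with the known matrix concavity and operator monotonicity of $G_t$ in each argument for $t\in[0,1]$, these let me nest base gadgets \emph{exactly}. For $t\le 1/2$, identity (i) gives $A\#_tB=A\#_{1/2}(A\#_{2t}B)$, so $T\preceq A\#_tB$ holds iff there is an $M$ with $M\preceq A\#_{2t}B$ and $T\preceq A\#_{1/2}(A,M)$: soundness is monotonicity in the second argument, and completeness follows by taking $M=A\#_{2t}B$. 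For $t\ge 1/2$ I would use the reflection $A\#_tB=B\#_{1-t}A$. Iterating this reduction along the binary expansion of $t$ terminates after $k$ steps when $t=p/2^k$, spending one $2n\times 2n$ gadget per step, which already yields the claimed $O(\log_2 q)$ bound for dyadic denominators.

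\textbf{General denominators (the main obstacle).} The recursion above follows the orbit of $t$ under the doubling map $x\mapsto 2x\bmod 1$; for non-dyadic $t$ this orbit is only \emph{eventually} periodic, with period the multiplicative order of $2$ modulo the odd part of $q$, which may be $\Omega(q)$. Hence unrolling the binary expansion is far too costly, and the real difficulty is to reach the $O(\log_2 q)$ count for \emph{arbitrary} $q$. I expect this to require replacing the orbit-following recursion by a ladder that consumes the bits of $q$ itself: each bit is handled by a constant number (about $2$, explaining the factor $2$ in $2\lfloor\log_2 q\rfloor+1$) of geometric-mean LMIs, with the exactness of the resulting finite system certified by a monotone fixed-point argument together with the self-duality $A\#_{1/2}B=(A^{-1}\#_{1/2}B^{-1})^{-1}$ of the basic gadget, which allows the block inequalities to encode a matrix and its inverse at no extra cost. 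Designing this ladder and proving its correctness is the technical heart of the theorem; it is exactly the content of the construction of Sagnol~\cite{sagnol2013semidefinite} and of Appendix~\ref{app:construction}.

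\textbf{The convex regime.} Finally, for $t\in[-1,0]\cup[1,2]$ the function $G_t$ is matrix convex and I would represent $\epi_t$ instead. Here I would first reduce to the interval $[0,1]$ by algebraic identities valid for all real $t$: the reflection $A\#_tB=B\#_{1-t}A$ maps $[1,2]$ to $[-1,0]$, while the inverse relation $A\#_{-s}B=A^{1/2}(A^{1/2}B^{-1}A^{1/2})^sA^{1/2}$ re-expresses a negative weight through a weight $s\in[0,1]$ applied to $A$ and $B^{-1}$. Both reductions are absorbed into one additional block LMI, which also realizes the required inversion through its Schur complement, accounting for the extra $2n\times 2n$ LMI in this case; the epigraph direction then follows by combining these reductions with the $[0,1]$ construction of the previous steps.
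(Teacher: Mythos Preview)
Your base case, your dyadic recursion via $A\#_t B=A\#_{1/2}(A\#_{2t}B)$ together with the reflection $A\#_t B=B\#_{1-t}A$, and your reduction of the convex regime $t\in[-1,0]\cup[1,2]$ back to $[0,1]$ all match the paper's construction (Lemmas~\ref{lem:base-case}, \ref{lem:double}, and \ref{lem:epihyporel}). The genuine gap is exactly where you say it is: the general-denominator step. You correctly diagnose that unrolling the binary expansion of $p/q$ does not terminate for non-dyadic $q$, but you then defer the actual construction to Appendix~\ref{app:construction}, and your speculative description (a ladder on the bits of $q$, self-duality of $\#_{1/2}$, a monotone fixed-point) is not what the paper does and does not obviously lead to a terminating $O(\log_2 q)$ scheme.

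The missing idea is much more concrete. The paper factors $p/q=(p/2^{\ell})\cdot(2^{\ell}/q)$ with $\ell=\lfloor\log_2 q\rfloor$, so that the first factor is dyadic (handled by your recursion in $\ell$ LMIs) and the second factor $s=2^{\ell}/q\in[1/2,1]$ has numerator a power of two. The crucial trick, Lemma~\ref{lem:simple-2}, is that $\hypo_s$ can be expressed in terms of $\hypo_{(2s-1)/s}$ via one extra $2n\times 2n$ LMI and one $n\times n$ LMI; since $(2s-1)/s=(2^{\ell+1}-q)/2^{\ell}$ is again dyadic, this costs only another $\ell$ LMIs. The identity underlying this reduction is $(A\#_{2t-1}B)\#_{1/2}B=A\#_t B$ together with the equivalence of $W\#_{1/t-1}A\succeq W\#_{-1}B$ and $A\#_t B\succeq W$ from Lemma~\ref{lem:properties}(iv). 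This is the step your proposal is missing; without it, the $2\lfloor\log_2 q\rfloor+1$ count is not established.
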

\begin{proof}
The construction is detailed in Appendix \ref{app:construction}.
\end{proof}
We briefly discuss qualitative differences between our construction and that of Sagnol~\cite{sagnol2013semidefinite}. 
Our construction is recursive in nature, repeatedly expressing $\hypo_{p/q}$ in terms of $\hypo_{p'/q'}$ for a `simpler' rational $p'/q'$, until reaching 
the base case of $\hypo_{1/2}$. This recursive structure makes our construction particularly natural to implement in code. Sagnol's construction 
is similar to what we would obtain if we explicitly unrolled our recursion, even though it is expressed quite differently. 
Indeed Sagnol's approach assigns variables to the vertices of a binary tree
and the LMIs describe relationships between each vertex and its children. To prove correctness of his SDP description, Sagnol requires a contraction argument 
with respect to the Riemannian metric on positive definite matrices. In contrast, the argument we give in Appendix~\ref{app:construction}
depends only on basic properties of the geometric mean and Schur complements.  



\subsection{SDP description for functions in Table \ref{tbl:functions}}

In this section we show how the SDP description of the matrix geometric mean can be used to obtain an SDP description of the functions given in Table \ref{tbl:functions}.

\subsubsection{Lieb's function}

We first consider Lieb's function. The following is a restatement of Theorem \ref{thm:main-lieb-intro} from the introduction with the additional case $t \in [-1,0]\cup [1,2]$.

\begin{theorem}
\label{thm:main-lieb}
Let $K$ be a fixed matrix in $\CC^{n\times m}$ and let $t = p/q$ be any rational number in $[-1,2]$. Let $F_t(A,B) = \tr\left[K^* A^{1-t} K B^t\right]$.
\begin{itemize}
\item If $t=p/q \in [0,1]$, then $F_t$ is concave and its hypograph admits a semidefinite programming representation using at most $2 \lfloor \log_2 q\rfloor + 1$ LMIs of size $2nm\times 2nm$, one LMI of size $nm\times nm$ and one scalar inequality.
\item If $t=p/q \in [-1,0]\cup [1,2]$, then $F_t$ is convex and its epigraph admits a semidefinite programming representation using at most $2 \lfloor \log_2 q\rfloor + 2$ LMIs of size $2nm\times 2nm$, one LMI of size $nm\times nm$ and one scalar inequality.
\end{itemize}
\end{theorem}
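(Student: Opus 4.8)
The plan is to reduce Lieb's function to a scalar quadratic form evaluated on a matrix geometric mean of two \emph{commuting} tensor-lifted matrices, and then to invoke Theorem~\ref{thm:mgeomean-sdp} directly. Write $k = \vecm(K) \in \CC^{nm}$. First I would use the vectorization identities $\vecm(XKY) = (Y^\top \otimes X)\,\vecm(K)$ and $\tr[K^*M] = \vecm(K)^*\vecm(M)$ to rewrite
\begin{equation}
\label{eq:liebvec}
F_t(A,B) = \tr\left[K^* A^{1-t} K B^t\right] = k^*\left((B^t)^\top \otimes A^{1-t}\right)k = k^*\left(\bar B^t \otimes A^{1-t}\right)k,
\end{equation}
where in the last step I used that $B$ is Hermitian, so $B^\top = \bar B$ and hence $(B^t)^\top = \bar B^t$. (The precise ordering of Kronecker factors here is dictated by the $\vecm$ convention; the other ordering is handled identically.)

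The key observation is that the matrix $\bar B^t \otimes A^{1-t}$ appearing in \eqref{eq:liebvec} is itself a weighted matrix geometric mean. Indeed, the matrices $I_m \otimes A$ and $\bar B \otimes I_n$ in $\S^{nm}_{++}$ commute, and since $G_t(P,Q) = P^{1-t}Q^t$ for commuting positive definite $P,Q$, I get
\[
(I_m \otimes A) \#_t (\bar B \otimes I_n) = (I_m \otimes A)^{1-t}(\bar B \otimes I_n)^t = (I_m \otimes A^{1-t})(\bar B^t \otimes I_n) = \bar B^t \otimes A^{1-t}.
\]
Combining this with \eqref{eq:liebvec} yields $F_t(A,B) = k^*\big[(I_m \otimes A)\#_t(\bar B \otimes I_n)\big]k$, i.e.\ a scalar linear functional of the $nm\times nm$ geometric mean of two matrices that depend linearly on $A$ and $B$ respectively (note $B \mapsto \bar B$ is linear).

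With this reduction in hand I would pass the quadratic form through the matrix hypograph. For $t \in [0,1]$, concavity of $F_t$ is inherited from matrix concavity of $G_t$, and introducing an auxiliary matrix variable $T \in \S^{nm}$ I claim
\[
\left\{(A,B,\tau) : F_t(A,B) \geq \tau\right\} = \left\{(A,B,\tau) : \exists\, T,\; (I_m\otimes A)\#_t(\bar B \otimes I_n) \psd T,\; k^* T k \geq \tau\right\}.
\]
The inclusion $\supseteq$ holds because $G_t \psd T$ forces $k^*G_t k \geq k^* T k \geq \tau$; for $\subseteq$ one simply takes $T = (I_m\otimes A)\#_t(\bar B\otimes I_n)$, which saturates the L\"owner inequality and satisfies $k^*Tk = F_t(A,B)\geq\tau$. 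The first constraint is an instance of $\hypo_t$ (for $nm\times nm$ matrices) precomposed with the linear maps $A \mapsto I_m\otimes A$ and $B \mapsto \bar B\otimes I_n$, so by Theorem~\ref{thm:mgeomean-sdp} it has an SDP description with $2\lfloor\log_2 q\rfloor+1$ LMIs of size $2nm$ and one LMI of size $nm$, while $k^*Tk \geq \tau$ is a single scalar inequality; this matches the claimed count. The convex case $t \in [-1,0]\cup[1,2]$ is identical with $\psd$ replaced by $\nsd$ and $\hypo_t$ by $\epi_t$, producing the one extra LMI of size $2nm$.

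The main obstacle is really the reduction identity \eqref{eq:liebvec}: the whole argument hinges on recognizing that Lieb's bilinear-looking trace is a quadratic form on the commuting-tensor geometric mean, and on getting the transposes, conjugates, and Kronecker orderings exactly right. Everything after that is the standard hypograph/epigraph trick together with a direct appeal to Theorem~\ref{thm:mgeomean-sdp}; the only remaining point to verify is that substituting the linear maps $A \mapsto I_m\otimes A$ and $B \mapsto \bar B\otimes I_n$ into the representation of $\hypo_t$ (resp.\ $\epi_t$) preserves semidefinite representability, which it does precisely because these maps are linear and $I_m\otimes A \pd 0 \Leftrightarrow A \pd 0$, $\bar B \otimes I_n \pd 0 \Leftrightarrow B \pd 0$.
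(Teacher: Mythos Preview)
Your proof is correct and follows essentially the same approach as the paper: the Ando-type identity expressing $F_t$ as a quadratic form in $\vecm(K)$ on a tensor product, the recognition of that tensor product as $(I\otimes A)\#_t(\bar B\otimes I)$ (or $(A\otimes I)\#_t(I\otimes\bar B)$ under the paper's row-major $\vecm$), the hypograph/epigraph trick with auxiliary $T$, and the appeal to Theorem~\ref{thm:mgeomean-sdp}. The only discrepancy is the Kronecker factor ordering, which you correctly flag as a convention issue.
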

\begin{proof}
To prove this theorem we use the well-known relationship between $F_t$ and the matrix-valued function $L_t(A,B) = A^{1-t} \otimes \bar{B}^t$ due to Ando. In fact it is not difficult to verify that we have the following identity:
\begin{equation}
\label{eq:andotensorid}
\tr\left[K^* A^{1-t} K B^t\right] = \vecm(K)^* (A^{1-t} \otimes \bar{B}^t) \vecm(K)
\end{equation}
where $\vecm(K)$ is a column vector of size $nm$ obtaining by concatenating the rows of $K$ and $\bar{B}$ is the entrywise complex conjugate of $B$ (see e.g., \cite[Lemma 5.12]{carlen-notes}). Thus, if $t \in [0,1]$ we have for any real number $\tau$
\begin{equation}
\label{eq:hypolieb}
\tr\left[K^* A^{1-t} K B^t\right] \geq \tau \iff \exists T \in \S^{nm}_{++} \text{ s.t. } \begin{cases}
A^{1-t} \otimes \bar{B}^t \succeq T\\
\vecm(K)^* T \vecm(K) \geq \tau.
\end{cases}
\end{equation}
We now show how to convert \eqref{eq:hypolieb} into an SDP formulation. The key idea (see e.g., \cite{simplestprooflieb}) is to note that
\begin{equation}
\label{eq:krongeomeanid}
A^{1-t} \otimes \bar{B}^t = (A \otimes I) \#_{t} (I \otimes \bar{B})
\end{equation}
where $I$ denotes the identity matrix of appropriate size. To see why \eqref{eq:krongeomeanid} holds, note that $A\otimes I$ and $I\otimes \bar{B}$ commute and so
\[
(A\otimes I) \#_{t} (I \otimes \bar{B}) = (A\otimes I)^{1-t} (I\otimes \bar{B})^{t} \overset{(a)}{=} (A^{1-t} \otimes I) (I \otimes \bar{B}^t) \overset{(b)}{=} A^{1-t} \otimes \bar{B}^t
\]
where $(a)$ can be shown using the eigenvalue decompositions of $A\otimes I$ and $I\otimes \bar{B}$, and $(b)$ follows from the properties of the Kronecker product.
Using the SDP formulation of the matrix geometric mean (Theorem \ref{thm:mgeomean-sdp}) we can thus formulate the constraint $A^{1-t} \otimes \bar{B}^t \succeq T$ using $2\lfloor \log_2(q) \rfloor + 1$ LMIs of size $2nm\times 2nm$ and one LMI of size $nm\times nm$ (where $t=p/q$). Plugging this in \eqref{eq:hypolieb} gives us an SDP formulation of the hypograph of Lieb's function with the required size.
The case $t \in [-1,0] \cup [1,2]$ is treated in the same way.
\end{proof}

\begin{remark}
\label{rem:extension_lieb}
\begin{itemize}
\item It is straightforward to extend Theorem \ref{thm:main-lieb} to get an SDP formulation of the functions $(A,B) \mapsto A^s \otimes B^t$ where $s$ and $t$ are nonnegative numbers such that $s+t \leq 1$. It suffices to observe that
\[
A^s \otimes B^t \succeq T \quad \Longleftrightarrow \quad \exists S \in \S^{nm}_{+} \text{ s.t. } 
\begin{cases}
A^{\frac{s}{s+t}} \otimes B^{\frac{t}{s+t}} \succeq S\\
S^{s+t} \succeq T.
\end{cases}
\]
\item Similarly one can also extend Theorem \ref{thm:main-lieb} to obtain an SDP formulation of a $k$-variate generalization of the Lieb function, namely
\[
(A_1,\dots,A_k) \mapsto A_1^{t_1} \otimes \dots \otimes A_k^{t_k}
\]
where $t_1,\dots,t_k \geq 0$ are such that $t_1+\dots+t_k = 1$. To do so we simply eliminate one matrix at a time. For example in the case $k=3$ we use:
\[
A_1^{t_1}\otimes A_2^{t_2} \otimes A_3^{t_3} \succeq T
\quad \Longleftrightarrow \quad
\exists S \in \S^{n_1n_2}_+ \text{ s.t. }
\begin{cases}
A_1^{\frac{t_1}{t_1+t_2}} \otimes A_2^{\frac{t_2}{t_1+t_2}} \succeq S\\
S^{t_1+t_2} \otimes A_3^{t_3} \succeq T.
\end{cases}
\]
\end{itemize}
\end{remark}

\begin{remark}[Tsallis entropies]
\begin{itemize}
\item For $t \in [0,1]$ the \emph{Tsallis entropy} \cite{tsallis1988possible} is defined  as
\[
S_t(A) := \frac{1}{t} \tr\left[A^{1-t} - A\right].
\]
It is easy to see that $S_t(A)$ converges (from above) to the von Neumann entropy $S(A) = -\tr[A\log A]$ when $t\rightarrow0$, i.e., $S_t(A) \geq S(A)$ for any $t \in [0,1]$ and $\lim_{t\rightarrow 0} S_t(A) = S(A)$. Also note that $S_t$ is concave for all $t \in [0,1]$. 
Its hypograph,
$\left\{(A,\tau)\in \S_{++}^n\times \RR: S_t(A) \geq \tau\right\}$, can be expressed in terms of the matrix geometric mean as 
\[  
\left\{(A,\tau)\in \S_{++}^n\times \RR: \exists T\in \S^n\;\;\textup{s.t.}\;\; A \#_t I \psd T,\;\; \frac{1}{t} \tr\left[ T-A\right] \geq \tau\right\}.\]
By rewriting $A\#_t I \psd T$ using the SDP description of the matrix geometric mean (with $B=I$), we obtain a SDP description of $S_t$ (when $t=p/q$) 
having $O(\log_2 q)$ LMIs of size at most $2n$.
\item The \emph{Tsallis relative entropy} is defined for $t \in [0,1]$ as (see \cite{abe2003nonadditive} and also \cite{furuichi2004fundamental})
\[
S_t(A\|B) := \frac{1}{t}\tr\left[A - A^{1-t} B^t\right].
\]
As noted in \eqref{eq:relentlim} the Tsallis relative entropy $S_t(A\|B)$ converges to  
the quantum relative entropy $S(A\|B) = \tr\left[A(\log A - \log B)\right]$ when $t \rightarrow 0$. It is also known that convergence is from below, i.e., $S_t(A\|B) \leq S(A\|B)$ for any $t \in [0,1]$ (see e.g., \cite[Proposition 2.1]{furuichi2004fundamental}).
By choosing $K=I$ in Lieb's theorem we see that $S_t(A\|B)$ is jointly convex in $(A,B)$. Indeed the epigraph of $S_t(\cdot\|\cdot)$   
can be expressed as
\begin{multline*}
\left\{(A,B,\tau)\in \S_{++}^n\times \S_{++}^n\times \RR: S_t(A\|B) \leq \tau\right\} = \\
\left\{ (A,B,\tau)\in \S_{++}^n\times \S_{++}^n\times \RR: \exists s\in \RR\;\;\textup{s.t.}\;\;\tr\left[A^{1-t}B^t\right] \geq s,\;\; \frac{1}{t}\left[\tr(A) - s\right] \leq \tau\right\}.
\end{multline*} 

By rewriting $\tr[A^{1-t}B^t]\geq s$ using the SDP description from Theorem \ref{thm:main-lieb} (with $K=I$), we obtain 
a SDP description of $S_t(\cdot\|\cdot)$ (with $t=p/q$) having $O(\log_2 q)$ LMIs of size at most $2n^2$.
\end{itemize}
\label{rem:tsallisentr}
\end{remark}

\subsubsection{The map $A\mapsto \tr\left[(K^* A^t K)^{1/t}\right]$}

Let $K$ be a fixed $n\times m$ matrix and consider the function $\Upsilon_{t}:\S_{++}^n\rightarrow \RR$ defined by 
\[ \Upsilon_{t}(A) = \tr\left[(K^* A^t K)^{1/t}\right].\]
The following result is due to Carlen and Lieb~\cite{carlen2008minkowski} where they established the case 
$t \in [0,2]$ (the same arguments were used to prove the case $t \in [-1,0)$ in \cite{frank2013monotonicity}; 
the case $t \in (0,1]$ was first established by Epstein~\cite{epstein1973remarks}).
\begin{theorem}
	\label{thm:carlen-lieb}
	If $t\in [1,2]$ then $\Upsilon_{t}$ is convex on $\S_{++}^n$. If $t\in [-1,1]\setminus\{0\}$ then $\Upsilon_{t}$
	is concave on $\S_{++}^n$. 
\end{theorem}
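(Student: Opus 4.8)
The plan is to reduce $\Upsilon_t$ to the Lieb--Ando function $F_s(A,B) = \tr[K^* A^{1-s} K B^s]$ by means of a variational (Legendre-type) formula for the trace power $\tr[C^{1/t}]$, and then to invoke the joint concavity/convexity of $F_s$ already available from Lieb's theorem (Theorem~\ref{thm:lieb}) together with Theorem~\ref{thm:main-lieb}. The point is that a direct composition argument fails --- $A\mapsto K^* A^t K$ is matrix concave for $t\in[0,1]$, but composing with the convex scalar power $x\mapsto x^{1/t}$ does not directly yield concavity --- so one must linearize the outer power first.

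First I would set $r = 1/t$ and use the scalar identity, valid for $x>0$: when $x^r$ is convex (i.e. $r\le 0$ or $r\ge 1$) one has $x^r = \max_{y>0}\{\,ry^{r-1}x - (r-1)y^r\,\}$ with the maximum attained at $y=x$, and when $x^r$ is concave (i.e. $0<r\le 1$) the same expression gives $x^r$ as a minimum over $y>0$. Its matrix analogue is the Klein (gradient) inequality for the spectral function $Y\mapsto\tr[Y^r]$: for $C,Y\succ 0$ one has $\tr[C^r] \ge \tr[rY^{r-1}C-(r-1)Y^r]$ when $x^r$ is convex (and the reverse inequality when $x^r$ is concave), with equality at $Y=C$. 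Applying this with $C = K^* A^t K$ and substituting $Y = Z^t$ (a bijection of $\S_{++}^n$ since $t\ne 0$), so that $Y^{r-1} = Z^{1-t}$ and $Y^r = Z$, yields after using cyclicity of the trace
\[
\Upsilon_t(A) = \operatorname*{opt}_{Z\succ 0}\left\{\,\tfrac{1}{t}\,\tr\!\left[K^* A^t K\, Z^{1-t}\right] - \tfrac{1-t}{t}\,\tr[Z]\,\right\},
\]
where $\operatorname{opt}$ is a maximum when $t\in[-1,0)\cup(0,1]$ and a minimum when $t\in[1,2]$.

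The second step is to recognize the key term as a value of the Lieb--Ando function: with $s = 1-t$ we have $\tr[K^* A^t K\,Z^{1-t}] = F_{1-t}(A,Z)$. I would then track the three regimes separately. For $t\in(0,1]$ we have $s=1-t\in[0,1)$, so $F_{1-t}$ is jointly concave by Lieb's theorem; since $1/t>0$ and $-\tfrac{1-t}{t}\tr[Z]$ is linear in $Z$, the objective is jointly concave in $(A,Z)$, and partial maximization of a jointly concave function is concave, so $\Upsilon_t$ is concave. For $t\in[1,2]$ we have $s=1-t\in[-1,0]$, so $F_{1-t}$ is jointly convex by Theorem~\ref{thm:main-lieb}; with $1/t>0$ the objective is jointly convex, and partial minimization preserves convexity. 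For $t\in[-1,0)$ we have $s=1-t\in(1,2]$, so $F_{1-t}$ is again jointly convex by Theorem~\ref{thm:main-lieb}, but now $1/t<0$ flips the objective to a jointly concave one, and partial maximization again gives concavity. In every case the sign bookkeeping matches the claimed statement.

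The main obstacle I anticipate is not the reduction itself but making the variational formula rigorous and handling domains. Concretely I would need to verify (i) the correct direction and equality condition of the Klein inequality in each convexity regime, which is standard but requires care with the sign of $r=1/t$; (ii) that the substitution $Y = Z^t$ is legitimate and that the optimum over $Y\succ 0$ equals that over $Z\succ 0$; and (iii) the passage from strictly positive matrices to all of $\S_+^n$, where $C = K^* A^t K$ may be singular, so that $C^{1/t}$ and the optimizer $Y=C$ are problematic for negative or fractional exponents. The cleanest route for (iii) is to establish concavity/convexity on the open cone $\S_{++}^n$ (working with $C\succ 0$, e.g. after a perturbation $A\mapsto A$ with $K^*K+\epsilon I$) and then pass to the boundary by continuity, since concavity and convexity are preserved under pointwise limits. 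The remaining ingredients --- joint concavity/convexity of $F_s$ and the preservation of concavity/convexity under partial optimization --- are then routine.
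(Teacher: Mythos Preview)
Your proposal is correct and is precisely the Carlen--Lieb argument that the paper cites for Theorem~\ref{thm:carlen-lieb}. The paper does not give its own proof of the theorem (it attributes the result to \cite{carlen2008minkowski,epstein1973remarks,frank2013monotonicity}), but the Lemma immediately following it establishes exactly the variational formula you derive, and from there your deduction via joint concavity/convexity of the Lieb--Ando function together with partial optimization is the standard route. The only cosmetic difference is that the paper obtains the variational inequality by applying the scalar AM--GM inequality to the eigenvalues of the commuting tensor products $(K^*A^tK)^{1/t}\otimes I$ and $I\otimes \bar X$, whereas you invoke the Klein (trace-gradient) inequality for $Y\mapsto\tr[Y^{1/t}]$ directly; these are equivalent formulations of the same one-line inequality, and your handling of the three sign regimes and the boundary passage to $\S_+^n$ is fine.
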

Rather than working with $\Upsilon_t$, it is slightly more natural to focus on the function $t\Upsilon_t$. Since $t$ is a fixed parameter, 
this simply changes the signs of some expressions for the cases $t\in [-1,0)$. It follows directly from 
Theorem~\ref{thm:carlen-lieb} that $t\Upsilon_t$ is convex on $\S_{++}^n$ for $t\in [-1,0) \cup [1,2]$, and concave on $\S_{++}^n$ for $t\in (0,1]$. 

In this section we show how to give SDP formulations of $\hypo(t\Upsilon_{t})$ for $t\in (0,1]$ and $\epi(t\Upsilon_{t})$ for $t\in [-1,0)\cup [1,2]$
by using our SDP formulations of Lieb's function in different regimes of the parameters.
Our SDP formulations 
rely on variational expressions for $t\Upsilon_t$ (equations \eqref{eq:carlen-max} and~\eqref{eq:carlen-min} to follow) 
established in \cite{carlen2008minkowski} (see also \cite{carlen-notes}). 
We include a proof of these variational descriptions, for completeness, en route to our expressions
for the hypograph/epigraph of $t\Upsilon_t$ in terms of Lieb's function 
(equations \eqref{eq:hypotUpsilont} and~\eqref{eq:epitUpsilont} to follow).
\begin{lemma} Let $A \in \S_{++}^n$ and $t \in [-1,2] \setminus \{0\}$.
\begin{itemize}
\item If $t \in (0,1]$ then 
	\begin{equation} 
	\label{eq:carlen-max}
		t\Upsilon_{t}(A) = \max_{X\in \S_{++}^m} \tr\left[K^* A^{t}KX^{1-t}\right] - (1-t)\tr[X].
	\end{equation}
	Hence 
	\begin{equation}
\label{eq:hypotUpsilont}
 \hypo(t\Upsilon_{t}) = \left\{(A,\tau)\in \S_{++}^n\times \RR :
\exists X \in \S_{++}^m \text{ s.t. } \tr\left[K^* A^t K X^{1-t}\right] - (1-t) \tr[X] \geq \tau \right\}.
\end{equation}
\item If $t\in [-1,0)\cup[1,2]$ then 
	\begin{equation}	
	\label{eq:carlen-min} 
		t\Upsilon_{t}(A) = \min_{X \in \S_{++}^m} \tr\left[K^* A^{t}KX^{1-t}\right] - (1-t)\tr[X].
	\end{equation}
	Hence
	\begin{equation}
\label{eq:epitUpsilont} \epi(t\Upsilon_{t}) = \left\{(A,\tau)\in \S_{++}^n\times \RR :
\exists X \in \S_{++}^m \text{ s.t. } \tr\left[K^* A^t K X^{1-t}\right] - (1-t) \tr[X] \leq \tau \right\}.
	\end{equation}
\end{itemize}
\end{lemma}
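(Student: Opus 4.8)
The core object is the function $t\Upsilon_t(A) = t\,\tr[(K^*A^tK)^{1/t}]$, and I want to express it as an optimization over an auxiliary matrix $X$. The natural tool is a variational characterization of the power function $Y \mapsto \tr[Y^{1/t}]$ applied to $Y = K^*A^tK$. The plan is to use a scalar-type duality: for the scalar function $y \mapsto y^{1/t}$ one has, via Young's inequality or convex conjugacy, an identity of the form $t\,y^{1/t} = \max_x \{ xy - (1-t)x^{1/(1-t)}\}$ (up to sign conventions depending on the regime of $t$), and its matrix trace analogue $t\,\tr[Y^{1/t}] = \mathrm{opt}_{X\succ 0}\,\tr[YX] - (1-t)\tr[X^{1/(1-t)}]$. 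Substituting $Y = K^*A^tK$ and then reparametrizing $X \mapsto X^{1-t}$ (so that $\tr[YX] = \tr[K^*A^tK\,X^{1-t}]$ and $(1-t)\tr[X^{1/(1-t)}] \cdot (\cdots) \to (1-t)\tr[X]$) should produce exactly \eqref{eq:carlen-max} and \eqref{eq:carlen-min}. I would verify the substitution carefully and track which regimes of $t$ make the conjugate inequality tight as a max versus a min.

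First I would reduce the matrix identity to a scalar statement by diagonalizing. Since the objective $\tr[YX] - (1-t)\tr[X^{1/(1-t)}]$ is unitarily structured around $Y$, the optimal $X$ commutes with $Y$ (a standard fact: the trace pairing $\tr[YX]$ is maximized/minimized over the spectral directions of $Y$, and the penalty term is unitarily invariant), so the problem decouples into scalar optimizations over the eigenvalues. For each eigenvalue $\lambda$ of $Y$ I solve the scalar problem $\max_x\,(\lambda x - (1-t)x^{1/(1-t)})$ or its min-counterpart. The first-order condition gives the optimal $x$ in closed form, and plugging back recovers $t\lambda^{1/t}$; summing over eigenvalues gives $t\,\tr[Y^{1/t}]$. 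The concavity/convexity of the scalar map $x \mapsto x^{1/(1-t)}$ — concave when $1/(1-t)\in(0,1)$, convex otherwise — dictates whether the stationary point is a max or a min, which is precisely how the cases split: $t\in(0,1]$ gives a maximum \eqref{eq:carlen-max}, while $t\in[-1,0)\cup[1,2]$ gives a minimum \eqref{eq:carlen-min}.

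Once the variational formula is proved, the hypograph and epigraph descriptions \eqref{eq:hypotUpsilont} and \eqref{eq:epitUpsilont} follow immediately and formally: writing $t\Upsilon_t(A)$ as a maximum means $t\Upsilon_t(A)\ge\tau$ iff the maximand exceeds $\tau$ for \emph{some} feasible $X$, giving the existential description of the hypograph; dually, writing it as a minimum means $t\Upsilon_t(A)\le\tau$ iff the minimand is below $\tau$ for some $X$, giving the epigraph. I expect the main obstacle to be the careful bookkeeping of signs and the direction of the conjugate inequality across the different parameter regimes: the sign of $1-t$ changes on $[1,2]$, the exponent $1/(1-t)$ can be negative or exceed $1$, and establishing that the optimizer lies in $\S_{++}^m$ (so the stated $\max$/$\min$ over the open positive-definite cone is actually attained rather than merely a supremum/infimum) requires checking that the stationary point has strictly positive eigenvalues. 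I would handle this by treating the regimes $t\in(0,1)$, $t=1$, $t\in(1,2]$, and $t\in[-1,0)$ separately at the scalar level, where each reduces to an elementary calculus fact about the tightness of Young's inequality.
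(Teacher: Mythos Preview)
Your proposal is correct and will work. The scalar identity you invoke is exactly the weighted AM--GM inequality $ty \gtrless y^t z^{1-t} - (1-t)z$ (with equality at $y=z$), and after the reparametrization $X\mapsto X^{1-t}$ the commuting reduction via von Neumann's trace inequality cleanly decouples the problem into the eigenvalue-wise scalar optimizations you describe. The hypograph/epigraph descriptions then follow exactly as you say.

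The paper proves the same variational formulas but lifts the scalar AM--GM inequality to matrices by a different device: instead of invoking von Neumann to reduce to commuting $X$ and $Y$, it manufactures commutativity by tensoring, applying the scalar inequality to the eigenvalues of the commuting pair $Y\otimes I$ and $I\otimes \bar X$ (with $Y=(K^*A^tK)^{1/t}$), and then collapsing the resulting matrix inequality to a trace inequality via $M\mapsto \vecm(I)^*M\,\vecm(I)$ and the Ando identity~\eqref{eq:andotensorid}. Your route is arguably more elementary and self-contained; the paper's route reuses the tensor-product machinery already set up for Lieb's function, so it integrates more tightly with the surrounding material. Both verify attainment at $X=(K^*A^tK)^{1/t}$ in the same way. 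One small point worth noting in your write-up (and glossed over in the paper as well): if $K$ does not have full column rank then $K^*A^tK$ is only positive \emph{semi}definite, so the optimizer sits on the boundary of $\S_{++}^m$ and the $\max$/$\min$ is strictly a $\sup$/$\inf$; this does not affect the hypograph/epigraph descriptions, which only require the one-sided inequalities.
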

\begin{proof}
	First observe that if $t\in [0,1]$ and $y,z > 0$ then the arithmetic-mean geometric-mean inequality gives
	\begin{equation}
	\label{eq:amgm1}
	ty \geq y^t z^{1-t} - (1-t)z
	\end{equation}
	for all $y,z>0$. 
	If $t\in [1,2]$ and $y,z >0$ then the arithmetic-mean geometric-mean inequality gives
	$\frac{1}{t}y^t + \frac{t-1}{t}z^t \geq yz^{t-1}$. Rearranging gives
	\begin{equation}
	\label{eq:amgm2}
	 ty \leq y^tz^{1-t}-(1-t)z
	\end{equation}
	for all $y,z>0$. 
	If $t\in [-1,0]$ and $a,b>0$ then $s = 1-t\in [1,2]$. Hence $sa + (1-s)b \leq a^s b^{1-s}$. 
	Putting $y=b$ and $z=a$ we obtain that \eqref{eq:amgm2} also holds when $t \in [-1,0]$ for all $y,z > 0$. 

	We can apply inequalities \eqref{eq:amgm1} and \eqref{eq:amgm2} to the eigenvalues of the positive definite commuting matrices
	$Y = (K^* A^t K)^{1/t} \otimes I$ and $Z = I\otimes \bar{X}$ (with $A\in \S_{++}^{n}$ and $X \in \S_{++}^m$). Doing so we see that if $t\in (0,1]$ then 
	\[ t(K^* A^t K)^{1/t} \otimes I \psd K^* A^t K \otimes \bar{X}^{1-t} - (1-t)(I\otimes \bar{X})\]
	for all $A \in \S_{++}^n$ and all $X\in \S_{++}^m$. 
	Similarly if $t\in [-1,0)\cup[1,2]$ then 
	\[ t(K^* A^t K)^{1/t} \otimes I \nsd K^* A^t K \otimes \bar{X}^{1-t} - (1-t)(I\otimes \bar{X})\]
	for all $A \in \S_{++}^n$ and all $X\in \S_{++}^m$. 
	If we apply the map $\S^{m^2} \ni M \mapsto \vecm(I)^* M \vecm(I)$ to both sides
	of these matrix inequalities and use identity \eqref{eq:andotensorid} we get that for $t\in [0,1)$ 
	\[ t\Upsilon_{t}(A) \geq \tr\left[K^* A^{t}K X^{1-t}\right] - (1-t)\tr[X] \]
	for all $A \in \S_{++}^n$ and all $X\in \S_{++}^m$, and for $t\in [-1,0)\cup[1,2]$
	\[ t\Upsilon_{t}(A) \leq \tr\left[K^* A^{t}K X^{1-t}\right] - (1-t)\tr[X]\]
	for all $A \in \S_{++}^n$ and all $X\in \S_{++}^m$.
To ensure that the variational formulas~\eqref{eq:carlen-max} and~\eqref{eq:carlen-min} hold, one simply checks 
that putting $X = (K^*AK)^{1/t}$ gives equality in both cases. The descriptions of $\hypo(t\Upsilon_t)$ for $t\in (0,1]$ 
and $\epi(t\Upsilon_t)$ for $t\in [-1,0)\cup[1,2]$ are direct consequences of~\eqref{eq:carlen-max} and~\eqref{eq:carlen-min}
respectively. 
\end{proof}
When $t$ is rational, each of the convex sets \eqref{eq:hypotUpsilont} and \eqref{eq:epitUpsilont} can be expressed explicitly in terms of LMIs by using the SDP description of Lieb's function from Theorem~\ref{thm:main-lieb}. The following summarizes the size of these descriptions. 
\begin{theorem}
\label{thm:CarlenLieb-LMI}
Let $p,q$ be relatively prime integers such that $p/q \in [-1,2] \setminus \{0\}$.
\begin{itemize}
\item If $t=p/q\in (0,1]$ then $\hypo(t\Upsilon_t)$ has a SDP description 
with at most $2\lfloor \log_2(q) \rfloor+1$ LMIs of size $2mn \times 2mn$, one LMI of size $mn \times mn$, and one scalar inequality.
\item If $t=p/q \in [-1,0) \cup [1,2]$ then $\epi(t\Upsilon_t)$ 
has a SDP description with at most $2\lfloor \log_2(q) \rfloor+2$ LMIs of size $2mn \times 2mn$, one LMI of size $mn \times mn$, and one scalar inequality.
\end{itemize}
\end{theorem}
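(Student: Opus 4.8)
The plan is to reduce the statement directly to Theorem~\ref{thm:main-lieb}, since the preceding lemma has already done all of the analytic work by producing the descriptions~\eqref{eq:hypotUpsilont} and~\eqref{eq:epitUpsilont}. The key observation I would start from is that the trace term appearing in both descriptions is itself a Lieb function at a reflected exponent. Writing $s := 1-t$ we have
\[ \tr\left[K^* A^t K X^{1-t}\right] = \tr\left[K^* A^{1-s} K X^s\right] = F_s(A,X), \]
with $A \in \S_{++}^n$ and $X \in \S_{++}^m$. Thus Theorem~\ref{thm:main-lieb} applies verbatim with the pair $(A,B)$ replaced by $(A,X)$, $K \in \CC^{n\times m}$ fixed, and Kronecker dimension $nm$.

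Next I would track the regimes and assemble the LMI description. For $t \in (0,1]$ we use~\eqref{eq:hypotUpsilont}; the reflected exponent $s = 1-t \in [0,1)$ lands in the concave range of $F_s$, so it is the \emph{hypograph} of $F_s$ that we invoke, and the inner constraint $F_s(A,X) - (1-t)\tr[X] \geq \tau$ is exactly the hypograph inequality $F_s(A,X) \geq \tau'$ under the affine substitution $\tau' := \tau + (1-t)\tr[X]$. Dually, for $t \in [-1,0) \cup [1,2]$ we use~\eqref{eq:epitUpsilont}; then $s = 1-t \in [-1,0] \cup (1,2]$ falls in the convex range of $F_s$, so the constraint becomes the \emph{epigraph} inequality $F_s(A,X) \leq \tau'$ with the same substitution. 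In either case the linear term $-(1-t)\tr[X]$ is absorbed entirely into the right-hand side of the single scalar inequality $\vecm(K)^* T \vecm(K) \geq \tau'$ (resp.\ $\leq \tau'$) produced by Theorem~\ref{thm:main-lieb}: because $\tr[X]$ is linear in $X$, this remains a single affine scalar inequality and introduces no new LMIs. The existential quantifier over $X$ causes no difficulty, since $X$ is simply kept as a lifting variable, which is permitted by the definition of SDP representability.

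Finally I would verify the size bookkeeping, which is the only point that genuinely requires care. Writing $t = p/q$ in lowest terms, the reflected exponent is $s = 1-t = (q-p)/q$, and since $\gcd(p,q) = 1$ forces $\gcd(q-p,q) = 1$, this fraction is again in lowest terms with the \emph{same} denominator $q$. Consequently invoking Theorem~\ref{thm:main-lieb} at parameter $s$ reproduces the counts in terms of $\log_2 q$ rather than the denominator of $s$: the hypograph case yields $2\lfloor \log_2 q \rfloor + 1$ LMIs of size $2mn$, one LMI of size $mn$, and one scalar inequality, while the epigraph case yields $2\lfloor \log_2 q \rfloor + 2$ LMIs of size $2mn$, one LMI of size $mn$, and one scalar inequality, matching the statement. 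I do not expect a real obstacle here — the substantive content is already in the lemma and in Theorem~\ref{thm:main-lieb} — but the one thing to check carefully is precisely this alignment: that the reflection $t \mapsto 1-t$ maps the concave range of $\Upsilon_t$ onto the concave range of $F_s$ (and the convex range onto the convex range), so that $\hypo(t\Upsilon_t)$ is built from the hypograph of $F_s$ and $\epi(t\Upsilon_t)$ from its epigraph, and that the denominator is preserved so the LMI count is not inflated.
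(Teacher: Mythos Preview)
Your proposal is correct and takes essentially the same approach as the paper: the paper simply states that the sets \eqref{eq:hypotUpsilont} and \eqref{eq:epitUpsilont} can be expressed as LMIs by invoking Theorem~\ref{thm:main-lieb}, and then records the resulting sizes without further argument. Your write-up is in fact more careful than the paper's, since you explicitly verify the two points the paper leaves implicit---that the reflection $t\mapsto s=1-t$ sends the concave (resp.\ convex) range of $\Upsilon_t$ to the concave (resp.\ convex) range of $F_s$, and that $\gcd(q-p,q)=\gcd(p,q)=1$ so the denominator governing the LMI count is unchanged.
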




\section{Numerical experiments}
\label{sec:numerical_experiments}

In this section we present some numerical results for the semidefinite programming representations given in this paper.

\subsection{Maximum entropy problem}

We consider maximum entropy optimization problems of the form
\begin{equation}
\label{eq:maxentropy}
\begin{array}{ll}
\text{maximize} & S_t\left(\sum_{i=1}^s p_i M_i\right)\\
\text{subject to} & p \geq 0, \; \sum_{i=1}^s p_i = 1
\end{array}
\end{equation}
where $M_1,\dots,M_s$ are fixed $n\times n$ positive semidefinite matrices of
trace one, and $S_t(A) = \frac{1}{t}\tr[A^{1-t} - A]$ is the Tsallis entropy
considered in Remark \ref{rem:tsallisentr}. We used CVX \cite{cvx,gb08} to
formulate the problem with the new function \verb|tsallis_entr| available in
our package \cite{lieb-cvx-code}. The listing below shows the Matlab code used
to solve \eqref{eq:maxentropy} in the case $s=3$:
\begin{verbatim}
% M1,M2,M3 are three positive semidefinite matrices
cvx_begin
    variable p(3);
    maximize (tsallis_entr(p(1)*M1+p(2)*M2+p(3)*M3,1/8));
    subject to
       p >= 0;
       sum(p) == 1;
cvx_end
\end{verbatim}
Note that \verb|tsallis_entr| is automatically recognized by CVX as being a
concave function of its first argument (the matrix). The second argument to
\verb|tsallis_entr| is the parameter $t$ which we set here to be $1/8$.

In Table~\ref{tbl:numerical} we present numerical results obtained with different values of $n$ (matrix
size) while fixing $s=10$. The results were obtained with the solver
SeDuMi~\cite{sturm1999using},
which is currently packaged with CVX.

 

\begin{table}[ht]
\centering
\begin{tabular}{cccc}
\toprule
$n$ & optimal value & time (s)\\
\midrule
\multirow{5}{*}{10} & 2.6027 & 0.638\\
 & 2.6051 & 0.501\\
 & 2.6280 & 0.292\\
 & 2.6092 & 0.250\\
 & 2.6262 & 0.260\\ \hline
\multirow{5}{*}{20} & 3.5662 & 0.521\\
 & 3.5706 & 0.519\\
 & 3.5625 & 0.523\\
 & 3.5665 & 0.516\\
 & 3.5717 & 0.516\\
 \bottomrule
 \end{tabular}
 \qquad
\begin{tabular}{cccc}
\toprule
$n$ & optimal value & time (s)\\
\midrule 
\multirow{5}{*}{30} & 4.1619 & 2.048\\
 & 4.1689 & 3.336\\
 & 4.1767 & 2.038\\
 & 4.1638 & 2.261\\
 & 4.1784 & 2.040\\ \hline
\multirow{5}{*}{40} & 4.6191 & 8.763\\
 & 4.6144 & 9.051\\
 & 4.6172 & 9.272\\
 & 4.6125 & 10.131\\
 & 4.6131 & 10.498\\
\bottomrule
\end{tabular}
\caption{Results of numerical experiments for the maximum entropy problem \eqref{eq:maxentropy} with $t=1/8$. The matrices $M_1,\dots,M_s$ were generated at random.
	Each cell (labeled by a value of $n$) corresponds to five different random trials. 
	Note that the SDP representation of \eqref{eq:maxentropy} with $t=1/8$ consists of $3$ semidefinite constraints of size $2n$ each.}
\label{tbl:numerical}
\end{table}

\subsection{Relative entropy of entanglement}

We now consider another numerical illustration of our results to compute lower bounds on the so-called \emph{relative entropy of entanglement} in quantum information theory which is used to measure the distance of a given bipartite state $\rho$ to the set of separable states. This quantity is in general intractable to compute \cite{huang2014computing} and we consider here a popular relaxation using the \emph{positive partial transpose} (PPT) criterion~\cite{peres1996separability}. This relaxation is defined in terms of the following optimization problem, where $\rho$ is a fixed positive semidefinite matrix of trace one and $\tau$ is the optimization variable:
\begin{equation}
\label{eq:REE}
\begin{array}{ll}
\underset{\tau}{\text{minimize}} & S(\rho \| \tau)\\
\text{subject to} & \tau \succeq 0, \; \tr[\tau] = 1\\
                  & \tau \in \text{PPT}.
\end{array}
\end{equation}
The constraint $\tau \in$ PPT is a linear matrix inequality constraint. We omit its precise meaning here. 
Note that the variable $\tau$ of the optimization problem \eqref{eq:REE} enters the second argument of the 
relative entropy $S$ in the cost function. As such the cost function is \emph{not} a matrix trace function 
of the form considered e.g., in \cite[Theorem 3.1]{sagnol2013semidefinite}.

If we replace the cost function $S(\rho\|\tau)$ in \eqref{eq:REE} by the
Tsallis relative entropy $S_t(\rho\|\tau)$ (for $t$ rational) the resulting optimization problem can be expressed as an SDP using the formulation given in Remark \ref{rem:tsallisentr}. Furthermore since $S_t(\rho\|\tau)\leq S(\rho\|\tau)$ the optimal value we get is always a lower bound to \eqref{eq:REE}. The function \verb|tsallis_rel_entr| available in our package \cite{lieb-cvx-code} can be used to formulate the resulting problem using CVX on Matlab. The code is shown below. (Note that
the code uses the function \verb|Tx| from the quantinf package
\cite{cubittQuantinf} to implement the PPT constraint on $\tau$.)

\begin{verbatim}
na = 2;
nb = 2;
% Generate a random positive semidefinite matrix rho of size na*nb of trace one
rho = randn(na*nb,na*nb); rho = rho*rho';
rho = rho/trace(rho);
cvx_begin
    variable tau(na*nb,na*nb) hermitian;
    minimize (tsallis_rel_entr(rho,tau,2^(-8)));
    subject to
       tau == hermitian_semidefinite(na*nb);
       trace(tau) == 1;
       % Positive partial transpose constraint
       Tx(tau,2,[na nb]) == hermitian_semidefinite(na*nb);
cvx_end
\end{verbatim}
Note that CVX automatically recognizes \verb|tsallis_rel_entr| as a convex
function of its arguments $\rho$ and $\tau$. The third argument of
\verb|tsallis_rel_entr| specifies the value of $t$ to use in the definition of
Tsallis relative entropy. Here we use $t = 2^{-8}$.

We now present numerical experiments where we solve the optimization problem
for random bipartite states $\rho$. We use the solver SeDuMi and compare our
results with the tailored algorithm developed in
\cite{zinchenko2010numerical,girard2015erratum} based on a cutting-plane
approach. Table~\ref{tbl:ree_results} shows the results for different matrix
sizes $n = n_A\times n_B$ (where $n_A$ and $n_B$ are the sizes of the
subsystems). Note that the cutting-plane approach of
\cite{zinchenko2010numerical,girard2015erratum} returns an interval of length
$\epsilon$ that is guaranteed to contain the optimal value of \eqref{eq:REE}
(we chose $\epsilon = 10^{-3}$ in the experiments). We see that our method
consistently gives lower bounds that are better than the cutting-plane approach
of \cite{zinchenko2010numerical,girard2015erratum} in a fraction of the time it
takes.

\begin{table}[ht]
\centering
\begin{tabular}{ccc|cc}
\toprule
 & \multicolumn{2}{c}{\textbf{Our approach}} & \multicolumn{2}{|p{5cm}}{\centering \textbf{Cutting-plane approach}\newline\textbf{ of \cite{zinchenko2010numerical,girard2015erratum}}} \\
$n = n_A\times n_B$ & value & time (s) & value & time (s)\\
\midrule 
\multirow{5}{*}{$4 = 2\times 2$}
& 0.0670 & 0.68 s & [0.0669,0.0678] & 6.38 s\\
& 0.0002 & 0.54 s & [0.0000,0.0010] & 4.33 s\\
& 0.0157 & 0.52 s & [0.0150,0.0160] & 6.58 s\\
& 0.0478 & 0.52 s & [0.0473,0.0480] & 7.45 s\\
& 0.0027 & 0.60 s & [0.0020,0.0030] & 6.70 s\\ \hline
\multirow{5}{*}{$6 = 3\times 2$}
& 0.0088 & 0.63 s & [0.0083,0.0093] & 14.21 s\\
& 0.0052 & 0.69 s & [0.0047,0.0057] & 17.28 s\\
& 0.0476 & 0.63 s & [0.0473,0.0483] & 17.40 s\\
& 0.0133 & 0.64 s & [0.0130,0.0140] & 14.69 s\\
& 0.0169 & 0.62 s & [0.0166,0.0174] & 26.28 s\\ \hline
\multirow{5}{*}{$9 = 3\times 3$}
& 0.0109 & 1.04 s & [0.0105,0.0115] & 44.65 s\\
& 0.0342 & 1.02 s & [0.0339,0.0349] & 39.47 s\\
& 0.0062 & 1.01 s & [0.0056,0.0066] & 52.37 s\\
& 0.0278 & 1.05 s & [0.0276,0.0286] & 40.20 s\\
& 0.0249 & 1.01 s & [0.0247,0.0257] & 26.35 s\\
\bottomrule
\end{tabular}
\caption{Solving \eqref{eq:REE} for random choices of bipartite states $\rho$
of size $n=n_A\times n_B$. In our approach we replace the cost function
$S(\rho\|\tau)$ by the Tsallis relative entropy $S_t(\rho\|\tau)$ with
$t=2^{-8}$ and use the SDP formulations given in this paper. Note that $S_t(\rho\|\tau) \leq S(\rho\|\tau)$ for any $\rho,\tau$ and as such our approach always returns a lower bound on \eqref{eq:REE}. We see that on all
the matrices tested, our method is much faster than the cutting-plane approach
of \cite{zinchenko2010numerical,girard2015erratum} and gives better lower
bounds. Note that the approach of
\cite{zinchenko2010numerical,girard2015erratum} returns an interval of length
$\epsilon$ guaranteed to contain the optimal value of \eqref{eq:REE} (we set
$\epsilon = 10^{-3}$ in the experiments).}
\label{tbl:ree_results}
\end{table}

\section{Conclusion}
\label{sec:conclusion}

We conclude by discussing the possibility of a SDP representation for a related jointly convex/concave function.

\paragraph{Sandwiched R{\'e}nyi divergence} The \emph{sandwiched R{\'e}nyi divergence} introduced in \cite{muller2013quantum,wilde2014strong} is defined as
\begin{equation}
\label{eq:sandrelentr}
(A,B) \mapsto \tr \left[ \left(A^{\frac{1-t}{2t}} B A^{\frac{1-t}{2t}}\right)^t \right].
\end{equation}
In \cite{frank2013monotonicity} Frank and Lieb proved that \eqref{eq:sandrelentr} is jointly concave for $t \in [1/2,1]$ and jointly convex for $t \geq 1$. Note that if $A$ and $B$ commute then \eqref{eq:sandrelentr} reduces to $\tr\left[A^{1-t} B^t\right]$; however these two expressions are different for general noncommuting matrices $A$ and $B$. The quantity \eqref{eq:sandrelentr} has found applications in quantum information theory, see e.g., \cite{tomamichelbook}.
In the case $t=1/2$, the expression \eqref{eq:sandrelentr} is called the \emph{fidelity} of $A$ and $B$ and is known to have the following semidefinite programming formulation \cite[Section 3.2]{watrousTQIbook}:
\[
\tr\left[\left(A^{1/2} B A^{1/2}\right)^{1/2}\right] = \max_{Z \in \CC^{n\times n}} \frac{1}{2}\left(\tr[Z]+\tr[Z^*]\right) \; : \; \begin{bmatrix} A & Z\\ Z^* & B\end{bmatrix} \succeq 0.
\]
A natural question is:
\begin{problem}
Find a semidefinite programming formulation for \eqref{eq:sandrelentr} for any $t \geq 1/2$ rational.
\end{problem}

\section*{Acknowledgments}

Hamza Fawzi was supported in part by AFOSR FA9550-11-1-0305. James Saunderson was supported by NSF grant CCF-1409836.

Hamza Fawzi would like to thank Omar Fawzi for discussions and comments, and for pointing out a mistake in Lemma 4 in a previous version of this manuscript.

\appendix
\section{Construction for the matrix geometric mean}
\label{app:construction}

In this section we give an SDP description of the matrix geometric mean.
Our construction heavily relies on the properties of the geometric mean which we review below.

\subsection{Properties of the matrix geometric mean}

\if0
\paragraph{The Schur complement lemma} 
The following is a well-known property of the Schur complement stated for the specific case we require and will be useful later.
\begin{lemma}
	If $A,B,C \in \S^n$ with $C \in \S^n_{++}$ then
	\[ \begin{bmatrix} A & B\\B & C\end{bmatrix} \psd 0 \qquad\iff \qquad A - BC^{-1}B \psd 0. \]
\end{lemma} 
\fi

For convenience, we first recall the definition of the \emph{$t$-weighted geometric mean} \mbox{$G_{t}: \S_{++}^n\times \S_{++}^n\rightarrow \S_{++}^n$}:
\[ G_{t}(A,B) = A \#_t B := A^{1/2}\left(A^{-1/2}BA^{-1/2}\right)^tA^{1/2}.\]

The following lemma summarizes important and well-known properties of the weighted geometric mean used in our construction.
\begin{lemma}
\label{lem:properties}
Suppose $A,B \in \S^n_{++}$.
\begin{itemize}
\item[(i)] If $X$ is an $n\times n$ invertible matrix and $t\in [0,1]$ then $X(A\#_t B)X^* = (XAX^*) \#_t (XBX^*).$
\item[(ii)] (Monotonicity) If $A \psd B \psd 0$ and $C \psd D \psd 0$  and $t\in [0,1]$ then $A \#_t C \psd B \#_t D$.
\item[(iii)] For any $s,t \in \RR$
	\begin{align}
	A \#_t B & = B \#_{1-t} A \label{eq:rev}\\
	A \#_s (A \#_t B) & = A \#_{st} B \label{eq:st}\quad\text{and}\\
	(A \#_t B) \#_s B & = A \#_{s+t-st} B.\label{eq:revst}
\end{align}
\item[(iv)] For any $s,t\in \RR$, and any $X \in \S_{++}^n$, 
\begin{equation}
	\label{eq:ineq-equiv} X \#_s A \psd X \#_t B \;\;\iff\;\; X \#_{-s} A \nsd X \#_{-t} B \;\;\iff\;\; A \#_{s+1} X \nsd B \#_{t+1} X.
\end{equation}
\end{itemize}
\end{lemma}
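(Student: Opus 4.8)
The plan is to treat part (i) as the engine of the entire lemma: once \emph{congruence invariance} is available for all real exponents (not merely $t\in[0,1]$), parts (ii)--(iv) reduce to standard facts about operator monotone functions together with purely scalar identities. So I would first prove a slightly stronger form of (i): for every invertible $X$, every $A,B\in\S^n_{++}$, and every real $t$, $X(A\#_t B)X^* = (XAX^*)\#_t(XBX^*)$.

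To prove this I would strip off unitaries and reduce to a one-line computation. The unitary case $U(A\#_t B)U^* = (UAU^*)\#_t(UBU^*)$ is immediate from the definition, since $(UAU^*)^{\pm1/2} = UA^{\pm1/2}U^*$ and $(UMU^*)^t = UM^tU^*$ for unitary $U$. For general invertible $X$, I set $Y := XA^{1/2}$ and $B_0 := A^{-1/2}BA^{-1/2}\succ 0$, so that $XAX^* = YY^*$ and $XBX^* = YB_0Y^*$. Writing the polar decomposition $Y = UP$ with $U$ unitary and $P = (Y^*Y)^{1/2}\succ 0$, the unitary case gives $(YY^*)\#_t(YB_0Y^*) = U\big[(P^2)\#_t(PB_0P)\big]U^*$, and evaluating from the definition (using $(P^2)^{\pm1/2}=P^{\pm1}$) yields $(P^2)\#_t(PB_0P) = PB_0^tP$. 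Unwinding, this equals $YB_0^tY^* = XA^{1/2}(A^{-1/2}BA^{-1/2})^tA^{1/2}X^* = X(A\#_t B)X^*$. This is the step I expect to require the most care, and it is the only genuinely matrix-theoretic ingredient; everything else is bookkeeping on top of it.

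With congruence invariance in hand, part (iii) becomes a reduction to commuting matrices: conjugating each identity by $A^{-1/2}$ and writing $N := A^{-1/2}BA^{-1/2}$, I would use $A^{-1/2}(A\#_t B)A^{-1/2} = I\#_t N = N^t$ to turn \eqref{eq:rev}, \eqref{eq:st}, \eqref{eq:revst} into the scalar facts $N\#_{1-t}I = N^t$, $I\#_s(N^t) = N^{st}$, and $N^t\#_s N = N^{s+t-st}$, each immediate since the matrices involved commute and $P\#_s Q = P^{1-s}Q^s$ for commuting $P,Q\succ 0$. For part (iv), conjugating by $X^{-1/2}$ and setting $\tilde A := X^{-1/2}AX^{-1/2}$, $\tilde B := X^{-1/2}BX^{-1/2}$ reduces the first inequality to $\tilde A^s\psd\tilde B^t$; the equivalence with the second, $\tilde A^{-s}\nsd\tilde B^{-t}$, is exactly the antimonotonicity of the matrix inverse applied to $\tilde A^s$ and $\tilde B^t$, while the equivalence of the second and third is immediate from the reversal identity \eqref{eq:rev}, which gives $A\#_{s+1}X = X\#_{-s}A$ and $B\#_{t+1}X = X\#_{-t}B$.

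Finally, for the monotonicity statement (ii) I would prove monotonicity in each argument separately and chain them. Monotonicity in the second argument, $C\psd D\Rightarrow A\#_t C\psd A\#_t D$, follows by conjugating with $A^{-1/2}$ (which preserves the L\"{o}wner order) and invoking the L\"{o}wner--Heinz inequality: $P\mapsto P^t$ is operator monotone for $t\in[0,1]$, so $A^{-1/2}CA^{-1/2}\psd A^{-1/2}DA^{-1/2}$ gives $(A^{-1/2}CA^{-1/2})^t\psd(A^{-1/2}DA^{-1/2})^t$. Monotonicity in the first argument then follows from \eqref{eq:rev}, since $A\#_t C = C\#_{1-t}A$ with $1-t\in[0,1]$. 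Chaining, $A\psd B$ and $C\psd D$ give $A\#_t C\psd B\#_t C\psd B\#_t D$, and the boundary case where the matrices are only positive semidefinite is handled by a standard continuity argument, perturbing into $\S^n_{++}$ and passing to the limit. The single nontrivial obstacle throughout is part (i); the L\"{o}wner--Heinz inequality and the antimonotonicity of the inverse are standard and would be cited rather than reproved.
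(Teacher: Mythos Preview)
Your proposal is correct, and for part (iv)---the only part the paper actually proves rather than cites---your argument is essentially identical to the paper's: conjugate by $X^{-1/2}$ to reduce $X\#_s A \psd X\#_t B$ to $(X^{-1/2}AX^{-1/2})^s \psd (X^{-1/2}BX^{-1/2})^t$, invoke antimonotonicity of the inverse to flip the inequality and negate the exponents, and then use the reversal identity \eqref{eq:rev} for the last equivalence.

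The difference is purely one of scope: the paper dispatches (i)--(iii) as ``well-known'' with a citation to the literature, whereas you actually supply arguments. Your route through (i)---proving congruence invariance for all real $t$ via polar decomposition of $XA^{1/2}$, then using it to collapse (iii) to commuting (hence scalar) identities and to derive (ii) from L\"owner--Heinz plus \eqref{eq:rev}---is a clean and self-contained organization. It buys you independence from external references at the cost of a bit more writing; the paper's choice buys brevity. Neither is preferable in any mathematical sense; they are just different editorial decisions about what to prove versus cite.
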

\begin{proof}
Properties (i)-(iii) are well-known, see e.g., \cite[Lemma 2.1]{lawson2013weighted}. We only include a proof of (iv).
\if0
These properties are well-known but we include a proof for completeness. Properties (i) and (ii) can be deduced from the 
following integral representation of $G_t$ (see \cite[Theorem 1.5.8]{bhatiaPsdMatrices} and the definition of $G_t$ in terms of $P_t$) for $t \in (0,1)$:
\begin{equation}
	\label{eq:integral}
	 G_t(A,B) = A \#_t B = \int_0^{\infty} (\lambda A^{-1} + B^{-1})^{-1}\lambda^{t-1}\frac{\sin(t\pi)}{\pi}\;d\lambda.
\end{equation}
\begin{itemize}
\item[(i)] Since $X$ is invertible,  $X(\lambda A^{-1} + B^{-1})^{-1}X^* = (\lambda (XAX^*)^{-1} + (XBX^*)^{-1})^{-1}$ for all $\lambda\geq 0$.
	The result then follows from the integral representation~\eqref{eq:integral}.
\item[(ii)] If $A \psd B$ and $C \psd D$ then $(\lambda A^{-1} + C^{-1})^{-1} \psd (\lambda B^{-1} + D^{-1})^{-1}$ for all $\lambda \geq 0$.
	The result then follows by the integral representation~\eqref{eq:integral}.
\item[(iii)] To establish~\eqref{eq:rev} let $A^{-1/2}B^{1/2} = UP$ where $U$ is unitary and $P\psd 0$. Then 
	\[ A^{-1/2}(A \#_t B)A^{-1/2} = (A^{-1/2}BA^{-1/2})^t = (UP^2U^*)^t = UP^{2t}U^*.\]
	On the other hand
	\begin{align*}
		A^{-1/2}(B \#_{1-t} A)A^{-1/2} & = A^{-1/2}B^{1/2}(B^{-1/2}AB^{-1/2})^{1-t}B^{1/2}A^{-1/2}\\
		& = UP(P^{-1}U^*UP^{-1})^{1-t}PU^* = UP^{2t}U^*.\end{align*}
To establish~\eqref{eq:st} we have that 
	\begin{align*}
		 A \#_s (A \#_t B) & = A^{1/2}(A^{-1/2}(A^{1/2}(A^{-1/2}BA^{-1/2})^tA^{1/2})A^{-1/2})^sA^{1/2}\\	
				& = A^{1/2}(A^{-1/2}BA^{-1/2})^{st}A^{1/2} = A \#_{st} B.
	\end{align*}
	To establish~\eqref{eq:revst} we use~\eqref{eq:rev} then~\eqref{eq:st} then~\eqref{eq:rev} again, to obtain
	\[ (A \#_t B) \#_s B = B \#_{1-s} (B \#_{1-t} A) = B \#_{(1-s)(1-t)} A = A \#_{s+t-st} B.\]
\fi
By first multiplying on the left and right by $X^{-1/2}$, then inverting both sides, then multiplying on the left and right by $X^{1/2}$
we have that 
\begin{align*}
	X \#_s A \psd X \#_t B & \iff (X^{-1/2}AX^{-1/2})^s \psd (X^{-1/2}BX^{-1/2})^t\\
				& \iff (X^{-1/2}BX^{-1/2})^{-t} \psd (X^{-1/2}AX^{-1/2})^{-s}\\
				& \iff X \#_{-t} B \psd X \#_{-s} A.
\end{align*}
Finally it follows from~\eqref{eq:rev} that $X \#_{-t} B \psd X \#_{-s} A$ is equivalent to $ B\#_{t+1} X \psd A \#_{s+1} X$. 
\end{proof}

The properties given in Lemma \ref{lem:properties} can be directly translated to relationships between the hypographs/epigraphs of the matrix geometric mean. Recall that $\hypo_t$ and $\epi_t$ are defined as:
\[
\hypo_{t} := \left\{ (A,B,T) \in \S^n_{++} \times \S^n_{++} \times \S^n : A \#_t B \succeq T \right\}
\]
for $t \in [0,1]$, and
\[
\epi_t := \left\{ (A,B,T) \in \S^n_{++} \times \S^n_{++} \times \S^n : A \#_t B \preceq T \right\}
\]
for $t \in [-1,0] \cup [1,2]$.

\begin{lemma}
\label{lem:epihyporel}
The following holds:
\begin{itemize}
\item[(i)] If $t \in [0,1]$ then
\begin{equation}
\label{eq:hyporev}
\hypo_{1-t} = \left\{(A,B,T) : (B,A,T)\in \hypo_{t}\right\}.
\end{equation}
\item[(ii)] If $t\in [-1,0]\cup[1,2]$ then 
\begin{equation}
\label{eq:epirev}
\epi_{1-t} = \left\{(A,B,T) : (B,A,T)\in \epi_{t}\right\}.
\end{equation}
\item[(iii)] For any $s,t \in [0,1]$ we have
\begin{equation}
\label{eq:simple-1}
\hypo_{st} = \{(A,B,T): \exists Z \;\;\text{s.t.}\;\;
			(A,B,Z)\in \hypo_{t},\;\;(A,Z,T)\in \hypo_{s}\}.
\end{equation}
\item[(iv)] For any $t\in [0,1]$, 
\begin{equation}
\label{eq:negt}
 \epi_{-t} = \left\{(A,B,T): \exists S \;\;\text{s.t.}\;\;(A,B,S)\in \hypo_{t},\;\; \begin{bmatrix} T & A\\A & S \end{bmatrix} \psd 0\right\}.
\end{equation}
\end{itemize}
\end{lemma}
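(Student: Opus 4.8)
The plan is to obtain each of the four set identities by translating the corresponding item of Lemma~\ref{lem:properties}, which is a statement about the single matrix $A \#_t B$, into a statement about the sets $\hypo_t$ and $\epi_t$. Parts (i)--(iii) are essentially immediate, so I would dispatch them quickly and reserve the real effort for (iv).

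For (i) I would apply the reversal identity \eqref{eq:rev}: since $A \#_{1-t} B = B \#_t A$, the condition $(A,B,T) \in \hypo_{1-t}$, namely $A \#_{1-t} B \psd T$, reads $B \#_t A \psd T$, which is exactly $(B,A,T) \in \hypo_t$. Part (ii) is the identical computation for the epigraph, after noting that $1-t \in [-1,0]\cup[1,2]$ whenever $t$ is, so $\epi_{1-t}$ is defined. For (iii) I would use the semigroup identity \eqref{eq:st}, namely $A \#_s (A \#_t B) = A \#_{st} B$. For the inclusion of $\hypo_{st}$ into the right-hand set, the witness $Z = A \#_t B$ works: then $(A,B,Z) \in \hypo_t$ holds with equality, while $A \#_s Z = A \#_{st} B \psd T$ gives $(A,Z,T) \in \hypo_s$. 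For the reverse inclusion, suppose $(A,B,Z) \in \hypo_t$ and $(A,Z,T) \in \hypo_s$; membership in $\hypo_s$ forces $Z \pd 0$, so monotonicity (Lemma~\ref{lem:properties}(ii), with the first argument fixed at $A$) promotes $A \#_t B \psd Z$ to $A \#_s(A \#_t B) \psd A \#_s Z$, and \eqref{eq:st} then yields $A \#_{st} B \psd A \#_s Z \psd T$, i.e.\ $(A,B,T) \in \hypo_{st}$.

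The crux is (iv), and here I would first establish the identity $A \#_{-t} B = A (A \#_t B)^{-1} A$. This follows by specializing \eqref{eq:st} to $s = -1$, giving $A \#_{-t} B = A \#_{-1}(A \#_t B)$, together with the one-line computation $A \#_{-1} C = A C^{-1} A$ straight from the definition of the geometric mean. Writing $C := A \#_t B \pd 0$, the condition $(A,B,T) \in \epi_{-t}$ becomes $A C^{-1} A \nsd T$, which by the Schur complement is equivalent to $\begin{bmatrix} T & A \\ A & C \end{bmatrix} \psd 0$; this already settles the existential description with the choice $S = C$. To pass to the relaxed constraint $A \#_t B \psd S$ appearing in \eqref{eq:negt}, the forward direction simply takes $S = A \#_t B$. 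The reverse direction is where I expect the main obstacle: given a feasible $S$, which a priori is only Hermitian, I must recover $A \#_{-t} B \nsd T$ from the fact that $S$ is merely a lower bound for $A \#_t B$. I would first argue that $S \pd 0$: if $S$ were singular, a test vector supported on $\ker S$ and paired against the invertible off-diagonal block $A$ would make the quadratic form of $\begin{bmatrix} T & A \\ A & S \end{bmatrix}$ negative, contradicting $\begin{bmatrix} T & A \\ A & S \end{bmatrix} \psd 0$. With $S \pd 0$ the Schur complement gives $T \psd A S^{-1} A$, and antitonicity of the matrix inverse converts $A \#_t B \psd S$ into $(A \#_t B)^{-1} \nsd S^{-1}$, whence $A \#_{-t} B = A (A \#_t B)^{-1} A \nsd A S^{-1} A \nsd T$. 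The only delicate points are the strict positivity of $S$ and keeping the direction of the inequality correct under inversion; everything else is bookkeeping with the identities of Lemma~\ref{lem:properties}.
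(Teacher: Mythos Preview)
Your proof is correct and follows essentially the same route as the paper: parts (i)--(iii) are obtained exactly as you describe (the paper omits these details, merely citing Lemma~\ref{lem:properties}), and for (iv) both you and the paper take $S = A\#_t B$ for the forward inclusion. For the reverse inclusion in (iv) the paper invokes the swap property~\eqref{eq:ineq-equiv} to pass from $A\#_t B \psd A\#_{-1}T$ to $A\#_{-t}B \nsd T$, while you unpack the same computation directly via antitonicity of the inverse and congruence by $A$; this is the identical mechanism at a different level of packaging, and your explicit check that $S \pd 0$ fills in a point the paper leaves implicit.
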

\begin{proof}
The proof of this lemma is a direct consequence of the properties of the matrix geometric mean stated in Lemma \ref{lem:properties}. We include a proof of (iv), the other items can be proved in a similar way. First observe that for any $A,S$ positive definite we have $A\#_{-1} S = AS^{-1}A$ thus by the Schur complement lemma we have
\begin{equation}
\label{eq:epiminusone}
\begin{bmatrix}T & A\\ A & S\end{bmatrix} \succeq 0 \quad \iff \quad A\#_{-1} S \preceq T.
\end{equation}
To prove \eqref{eq:negt}, suppose $(A,B,T) \in \epi_{-t}$, i.e., $A\#_{-t} B \nsd T$. Let $S = A\#_t B$. Then $A \#_{-1}S = A \#_{-1}(A \#_t B) = A \#_{-t} B \nsd T$. So, by \eqref{eq:epiminusone} we have
	\[ \begin{bmatrix} T & A\\A & S\end{bmatrix} \psd 0\] 
as desired.
	For the reverse inclusion, suppose there exists $S\in \S_{++}^n$ such that $A\#_t B \psd S$ and $S\psd A\#_{-1}T$. 
	Then by~\eqref{eq:ineq-equiv} of Lemma~\ref{lem:properties} we have that 
	\[ A \#_t B \psd A \#_{-1} T \;\;\implies\;\; A\#_{-t} B \nsd A \#_1 T = T\]
 	Hence $(A,B,T)\in \epi_{-t}$ as required.
\end{proof}

\subsection{Semidefinite representation of the matrix geometric mean}

The remainder of this section is devoted to getting an SDP description of the matrix geometric mean (Theorem \ref{thm:mgeomean-sdp}). Our construction is recursive in nature and heavily relies on the properties of the matrix geometric mean given above. Section \ref{sec:basecase} treats the base case $t=1/2$, Section \ref{sec:denp2} treats the case where $t=p/q \in [0,1]$ and $q$ is a power of two and Section \ref{sec:nump2} treats the case where $t = p/q \in [1/2,1]$ and $p$ is a power of two. In Section \ref{sec:summary} we combine these cases together and complete the proof of Theorem \ref{thm:mgeomean-sdp}.

\subsubsection{Base case $t=1/2$}
\label{sec:basecase}

The following well-known lemma gives an SDP description of $\hypo_{1/2}$. It forms the base case of our construction and we thus include a proof for completeness.

\begin{lemma}
\label{lem:base-case}
We have
\begin{equation}
\label{eq:hypo1/2}
\hypo_{1/2} = \left\{(A,B,T) : \exists W \in \S^n, \;\; \begin{bmatrix} A & W\\W & B\end{bmatrix} \psd 0 \text{ and } W \psd T\right\}.
\end{equation}
\end{lemma}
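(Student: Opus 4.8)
The plan is to prove the identity in Lemma~\ref{lem:base-case} by showing both inclusions between the two sets, relying on the Schur complement characterization of positive semidefiniteness. Since the matrix $\begin{bmatrix} A & T\\T & B\end{bmatrix}$ involves a symmetric off-diagonal block $T$ and $B \in \S^n_{++}$, the Schur complement lemma gives
\[
\begin{bmatrix} A & T\\T & B\end{bmatrix} \psd 0 \quad\iff\quad A - T B^{-1} T \psd 0.
\]
So the whole problem reduces to showing that $A \#_{1/2} B \psd T$ is equivalent to $A - T B^{-1} T \psd 0$.

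First I would reduce to a clean normalized form by ``dividing through'' by $A^{1/2}$. Using property (i) of Lemma~\ref{lem:properties} with $X = A^{-1/2}$, the condition $A \#_{1/2} B \psd T$ is equivalent to $I \#_{1/2} (A^{-1/2}BA^{-1/2}) \psd A^{-1/2}TA^{-1/2}$, i.e., setting $\tilde B = A^{-1/2}BA^{-1/2}$ and $\tilde T = A^{-1/2}TA^{-1/2}$, to $\tilde B^{1/2} \psd \tilde T$. Congruence by $A^{-1/2}$ on both sides of the matrix inequality $A - TB^{-1}T \psd 0$ likewise turns it into $I - \tilde T \tilde B^{-1} \tilde T \psd 0$. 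Hence it suffices to prove, for $\tilde B \pd 0$, the scalar-flavored equivalence $\tilde B^{1/2} \psd \tilde T \iff I \psd \tilde T \tilde B^{-1} \tilde T$.

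For this final equivalence I would handle the two directions symmetrically. In the forward direction, from $\tilde T \nsd \tilde B^{1/2}$ I want to deduce $\tilde T \tilde B^{-1} \tilde T \nsd I$; the clean way is to conjugate by $\tilde B^{-1/2}$: the inequality $\tilde B^{1/2} \psd \tilde T$ is equivalent to $I \psd \tilde B^{-1/2} \tilde T \tilde B^{-1/2} =: M$, where $M$ is Hermitian, and $I \psd M$ gives $I \psd M^2 = \tilde B^{-1/2} \tilde T \tilde B^{-1} \tilde T \tilde B^{-1/2}$ only after noting $M \psd 0$. The subtlety here — and what I expect to be the main obstacle — is that squaring a Löwner inequality is \emph{not} monotone in general ($M \nsd I$ does not imply $M^2 \nsd I$ unless one controls the sign of $M$), so I must first argue that $T \psd 0$ whenever $(A,B,T)\in \hypo_{1/2}$, or restrict attention to the relevant sign. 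A convenient fix is to observe that the operator square root $x \mapsto x^{1/2}$ is operator monotone on $\S^n_+$, so $M \nsd I$ does give $M^{1/2} \nsd I$ going the \emph{other} way; concretely I would invoke operator monotonicity of $x\mapsto x^{1/2}$ directly to pass between $\tilde T \tilde B^{-1}\tilde T \nsd I$ and its square root, thereby sidestepping the non-monotonicity of squaring. I would then conclude both inclusions and assemble the claimed set identity~\eqref{eq:hypo1/2}.
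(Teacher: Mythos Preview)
Your overall strategy (Schur complement, congruence by $A^{-1/2}$, then operator monotonicity of the square root) is exactly the paper's, and you are right to flag the delicate direction: passing from $\tilde B^{1/2}\psd \tilde T$ to $I\psd \tilde T\tilde B^{-1}\tilde T$ (equivalently, from $\tilde B^{1/2}\psd \tilde T$ to $\tilde B\psd \tilde T^2$) amounts to squaring a L\"owner inequality, which is not valid without a sign condition on $\tilde T$. However, your proposed fix --- arguing that $T\psd 0$ whenever $(A,B,T)\in\hypo_{1/2}$ --- is false: take $A=B=I$ and $T=-I$, so that $A\#_{1/2}B = I \psd -I = T$ while $T\not\psd 0$. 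The alternative fix you sketch, invoking operator monotonicity of $x\mapsto x^{1/2}$, runs in the \emph{wrong} direction for this inclusion: it lets you pass from $\tilde B\psd \tilde T^2$ to $\tilde B^{1/2}\psd |\tilde T|\psd \tilde T$, not the other way around.

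In fact this direction cannot be rescued, because the set equality as written is false. With $n=1$, $A=B=1$, $T=-2$ one has $A\#_{1/2}B=1\geq -2=T$, so $(1,1,-2)\in\hypo_{1/2}$, yet $\det\left[\begin{smallmatrix}1&-2\\-2&1\end{smallmatrix}\right]=-3<0$. What \emph{is} true --- and what suffices for the recursive SDP construction downstream --- is (i) the LMI set is contained in $\hypo_{1/2}$ (this is the direction that does follow from operator monotonicity of the square root), and (ii) for each fixed $(A,B)$ the L\"owner-maximal $T$ satisfying the LMI is exactly $A\#_{1/2}B$. The paper's chain of equivalences glosses over the same problematic step; your instinct that the squaring step is the obstacle was correct, but the obstacle is genuine rather than a technicality to be patched.
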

\begin{proof}
For the inclusion $\supseteq$ we use the Schur complement lemma and monotonicity of the matrix square root:
\[
\begin{aligned}
\begin{bmatrix} A & W\\ W & B\end{bmatrix} \succeq 0 \Longrightarrow B \succeq WA^{-1}W & \Longrightarrow A^{-1/2} B A^{-1/2} \succeq (A^{-1/2} T A^{-1/2})^2\\
&\Longrightarrow (A^{-1/2} B A^{-1/2})^{1/2} \succeq A^{-1/2} W A^{-1/2}\\
&\Longrightarrow A \#_{1/2} B \succeq W
\end{aligned}
\]
which implies, with the condition $W \psd T$ that $A \# B \psd T$.

For the reverse inclusion $\subseteq$, it suffices to take $W = A\# B$. Indeed one can verify that $\left[\begin{smallmatrix} A & A\#B\\ A\#B & B \end{smallmatrix}\right]$ is positive semidefinite since we have: 
\[
\begin{bmatrix}
A & A\# B\\
A \# B & B
\end{bmatrix}
=
\begin{bmatrix}
A^{1/2}\\
A^{1/2} (A^{-1/2} B A^{-1/2})^{1/2}
\end{bmatrix}
\begin{bmatrix}
A^{1/2}\\
A^{1/2} (A^{-1/2} B A^{-1/2})^{1/2}
\end{bmatrix}^*
\psd 0. \]
\end{proof}

\begin{remark}
In the description \eqref{eq:hypo1/2} $(A,B,T)$ is understood to be restricted to $\S^n_{++} \times \S^n_{++} \times \S^n$. This will be implicit in our subsequent SDP descriptions of $\hypo_t$ and $\epi_t$.
\end{remark}

\subsubsection{Denominator is a power of two}
\label{sec:denp2}
In this section we give an SDP description of $\hypo_{p/2^{\ell}}$ when $p$ is odd and $\ell$ is a positive integer such that $p<2^{\ell}$.
Observe that $p/2^{\ell}$ has a binary expansion of length $\ell$ as $(0.m_\ell m_{\ell-1}\cdots {m_{1}})_2$ where $m_1=1$ (because $p$ is odd) and 
$m_i\in \{0,1\}$ for $i=2,\ldots,\ell$. The construction can be expressed explicitly in terms of this binary expansion by repeatedly applying the following recursive rules, which follow easily from the properties of the matrix geometric mean (Lemma \ref{lem:properties}): for $t \in [0,1/2]$, we have
\[
\hypo_t = \left\{(A,B,T): \exists Z\in \S^n\;\text{s.t.}\; (A,Z,T)\in \hypo_{2t},\; \begin{bmatrix} A & Z\\Z & B \end{bmatrix} \psd 0\right\},
\]
and for $t \in [1/2,1]$,
	\[
	\hypo_t = \left\{(A,B,T): \exists Z\in \S^n\;\text{s.t.}\; (Z,B,T)\in \hypo_{2t-1},\; \begin{bmatrix} A & Z\\Z & B \end{bmatrix} \psd 0\right\}.
	\]

Proposition~\ref{prop:denp2-hypo}, to follow, explicitly gives this semidefinite formulation of $\hypo_{p/2^{\ell}}$. 
Note that if $m = 0$ then $A \#_m B = A$ and if $m=1$ then $A\#_m B = B$. In particular, in each case the expression is actually linear in $A$ and $B$. 
\begin{proposition}
	\label{prop:denp2-hypo}
	Suppose $p$ is an odd positive integer and $\ell$ is a positive integer such that $p< 2^\ell$. 
	Let $p/2^{\ell} = (0.m_{\ell}m_{\ell-1}\cdots m_{1})_2$ be the binary expansion of $p/2^{\ell}$ where $m_1=1$ and  $m_i\in \{0,1\}$
	for $i=2,\ldots,\ell$. Then 
	\begin{align}
		 \hypo_{p/2^\ell} & = \left\{(A,B,T): \exists Z_1,\ldots,Z_{\ell-1},Z_{\ell}\in \S^n\;\text{s.t.}\;\;
	\begin{bmatrix} A \#_{m_i} B & Z_{i}\\Z_{i} & Z_{i-1}\end{bmatrix} \psd 0\;\;\text{for $i=2,3,\ldots,\ell$,}\right.\nonumber\\
		& \qquad\qquad\qquad\qquad\qquad\qquad\qquad\qquad\qquad\qquad
	\left.\begin{bmatrix} A & Z_{1}\\Z_{1} & B\end{bmatrix} \psd 0, \; Z_{\ell} \psd T\right\}.\label{eq:LMIden2}
	\end{align}

	Hence $\hypo_{p/2^\ell}$ has an SDP description with $\ell$ LMIs, each of size $2n\times 2n$, and one LMI of size $n\times n$. 
\end{proposition}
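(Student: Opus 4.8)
The plan is to prove the claim by induction on the number of bits $\ell$, repeatedly peeling off the leading bit of the binary expansion using Lemma~\ref{lem:double} and terminating with the base case Lemma~\ref{lem:base-case}. The crux is a simple observation about how the doubling maps in Lemma~\ref{lem:double} act on a binary expansion: if $t = (0.m_\ell m_{\ell-1}\cdots m_1)_2 \in [0,1]$, then doubling shifts the expansion one place to the left, so that $2t = (0.m_{\ell-1}\cdots m_1)_2$ when $m_\ell = 0$, and $2t - 1 = (0.m_{\ell-1}\cdots m_1)_2$ when $m_\ell = 1$. In either case the reduced value is $(0.m_{\ell-1}\cdots m_1)_2$, an instance with one fewer bit.

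First I would set up the inductive step by identifying how the leading bit $m_\ell$ selects the branch of Lemma~\ref{lem:double} and, crucially, which matrix occupies the top-left corner of the Schur-complement LMI. When $m_\ell = 0$ we have $t\in[0,1/2]$, so \eqref{eq:double} introduces $\begin{bmatrix} A & Z_\ell \\ Z_\ell & Z_{\ell-1}\end{bmatrix}\succeq 0$; when $m_\ell = 1$ we have $t\in[1/2,1]$, so \eqref{eq:double1} introduces $\begin{bmatrix} B & Z_\ell \\ Z_\ell & Z_{\ell-1}\end{bmatrix}\succeq 0$. Since $A\#_0 B = A$ and $A\#_1 B = B$, both branches are captured uniformly by the single LMI $\begin{bmatrix} A\#_{m_\ell} B & Z_\ell \\ Z_\ell & Z_{\ell-1}\end{bmatrix}\succeq 0$, and in both cases the remaining membership condition is $(A,B,Z_{\ell-1})\in \hypo_{(0.m_{\ell-1}\cdots m_1)_2}$, with $Z_{\ell-1}$ the fresh existential variable supplied by Lemma~\ref{lem:double}. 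This reduces the $\ell$-bit instance to an $(\ell-1)$-bit instance, exactly matching the inductive hypothesis.

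Next I would iterate this reduction down to the base case. After processing bits $m_\ell, m_{\ell-1},\ldots,m_2$ we have accumulated the LMIs $\begin{bmatrix} A\#_{m_i} B & Z_i \\ Z_i & Z_{i-1}\end{bmatrix}\succeq 0$ for $i=\ell,\ell-1,\ldots,2$, together with the residual condition $(A,B,Z_1)\in\hypo_{(0.m_1)_2}$. Because $p$ is odd we have $m_1 = 1$, so $(0.m_1)_2 = 1/2$, and Lemma~\ref{lem:base-case} rewrites this residual condition as the single LMI $\begin{bmatrix} A & Z_1 \\ Z_1 & B\end{bmatrix}\succeq 0$. Collecting all the LMIs produces precisely \eqref{eq:LMIden2}: the $\ell-1$ LMIs of size $2n\times 2n$ from the recursive steps plus one more of the same size from the base case, for a total of $\ell$ LMIs of size $2n\times 2n$.

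I expect the main obstacle to be bookkeeping rather than conceptual difficulty. One must track the indexing of the existential variables $Z_i$ carefully, verify that every intermediate value $(0.m_i\cdots m_1)_2$ lies in $[0,1]$ so that Lemma~\ref{lem:double} is applicable at each step, and confirm at each step that the leading-bit-to-branch correspondence places the correct matrix $A\#_{m_i}B$ in the top-left corner. A clean way to avoid confusion is to state the induction hypothesis directly on the family of sets $\hypo_{(0.m_i\cdots m_1)_2}$, carrying the residual variable $Z_i$ through the recursion, and to invoke the uniform LMI $\begin{bmatrix} A\#_{m_i}B & Z_i \\ Z_i & Z_{i-1}\end{bmatrix}\succeq 0$ at every inductive step.
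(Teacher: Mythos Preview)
Your proposal is correct and follows essentially the same route as the paper: induction on $\ell$, with Lemma~\ref{lem:base-case} as the base case and Lemma~\ref{lem:double} providing the inductive step, using the leading bit $m_\ell$ to select between~\eqref{eq:double} and~\eqref{eq:double1} and observing that both branches yield the uniform LMI $\begin{bmatrix} A\#_{m_\ell} B & Z_\ell \\ Z_\ell & Z_{\ell-1}\end{bmatrix}\succeq 0$. The paper's proof is perhaps slightly terser in that it simply notes the binary expansion of the reduced value is $(0.m_{\ell-1}\cdots m_1)_2$ in each case and invokes the induction hypothesis directly, but your bookkeeping remarks about tracking the $Z_i$ and the range of the intermediate values are exactly the right checks.
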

 \begin{proof}
 For the inclusion $\subseteq$, take $Z_1 = A\# B$ and $Z_{i} = (A \#_{m_i} B) \# Z_{i-1}$ for $i=2,\ldots,\ell$. Using properties \eqref{eq:st} and \eqref{eq:revst} of the matrix geometric mean one can verify (e.g., by induction) that $Z_i = A\#_{0.m_{i} \dots m_1} B$ for all $i=1,\ldots,\ell$ and in particular $Z_{\ell} = A \#_{p/2^{\ell}} B \psd T$.
 
 For the reverse inclusion $\supseteq$, first note that an LMI of the form $\left[\begin{smallmatrix} X & Z\\Z & Y\end{smallmatrix}\right] \psd 0$ implies $X \# Y \psd Z$ (see first part of the proof of Lemma \ref{lem:base-case}). Thus the LMI constraints on the right-hand side of \eqref{eq:LMIden2} imply
 \begin{equation}
 \label{eq:LMIden2-2gm}
 \begin{cases}
  A \# B \psd Z_1, \text{ and } &\\
 (A \#_{m_i} B) \# Z_{i-1} \psd Z_i & \text{ for } i=2,\ldots,\ell.
 \end{cases}
 \end{equation}
From \eqref{eq:LMIden2-2gm} it follows by induction on $i$, and properties of the matrix geometric mean (Equations \eqref{eq:st} and \eqref{eq:revst}) that $A\#_{0.m_i m_{i-1} \dots m_1} B \psd Z_i$ for all $i=1,\ldots,\ell$. In particular this implies that $A\#_{p/2^{\ell}} B \psd Z_{\ell} \psd T$.
	\end{proof}	

We conclude with an example in which the denominator is a power of two.
\begin{example}[SDP representation of $\hypo_{5/8}$]
	\label{eg:5over8}
	Let $p=5$ and $\ell=3$ so that $p/2^{\ell} = 5/8 = (0.101)_2$. Consider constructing a SDP representation of $\hypo_{5/8}$. 
	We have that $m_1 = m_3=1$ and $m_2=0$ so that $A \#_{m_1} B = B$ and $A \#_{m_2} B = A$. Applying Proposition~\ref{prop:denp2-hypo} gives
	\[ \hypo_{5/8} = \left\{(A,B,T): \exists Z_1,Z_2,Z_3\;\;\text{s.t.}\;\;
	Z_3 \psd T, \;\;
\begin{bmatrix} B & Z_3\\Z_3 & Z_2\end{bmatrix} \psd 0,\;\;\begin{bmatrix} A & Z_2\\Z_2 & Z_1\end{bmatrix} \psd 0,\;\;
\begin{bmatrix} A & Z_1\\Z_1 & B\end{bmatrix} \psd 0\right\}\]
using $\ell=3$ LMIs of size $2n\times 2n$ and one LMI of size $n\times n$. 
\end{example}

\subsubsection{Numerator is a power of two}
\label{sec:nump2}

In this section we show how to construct an SDP representation of $\hypo_{t}$ when $t$ has a numerator that is a power of two and $t \in [1/2,1]$.
We do this by relating $\hypo_{t}$ and $\hypo_{\frac{2t-1}{t}}$ (see Lemma~\ref{lem:simple-2} to follow). This is useful because if $t = 2^{\ell}/q$ with $t \in [1/2,1]$, then $\frac{2t-1}{t} = \frac{2^{\ell+1}-q}{2^{\ell}}$
has a denominator that is a power of two. Hence we can relate $\hypo_{2^\ell/q}$ with $\hypo_{\frac{2^{\ell+1}-q}{2^{\ell}}}$, an SDP description of 
which we can obtain from Proposition~\ref{prop:denp2-hypo}. 


\begin{lemma}
\label{lem:simple-2}
If $t \in [1/2,1]$ then
\begin{equation}
\label{eq:simple2s}
 \hypo_{t} = \left\{(A,B,T): \exists Z,W\in \S^n_{++}\;\;\text{s.t.}\;\;
			(A,W,Z)\in \hypo_{\frac{2t-1}{t}},\;\;\begin{bmatrix} Z & W\\W & B\end{bmatrix} \psd 0,\;\;W\psd T\right\}.
\end{equation}
\end{lemma}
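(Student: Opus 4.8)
The plan is to prove the claimed identity by establishing the two inclusions separately. First observe that for $t \in [1/2,1]$ we have $\frac{2t-1}{t} = 2 - \tfrac{1}{t} \in [0,1]$, so $\hypo_{\frac{2t-1}{t}}$ is indeed well defined; abbreviate $u = \frac{2t-1}{t}$ and record for later the two elementary exponent identities $ut = 2t-1$ and $1-u = \frac{1-t}{t}$. Note also that $u \in [0,1]$ for every $t \in [1/2,1]$, which is what will let me invoke the congruence and monotonicity properties of Lemma~\ref{lem:properties} (both stated for parameters in $[0,1]$).

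For the inclusion $\hypo_t \subseteq \{\cdots\}$ I would give an explicit feasible construction. Given $(A,B,T)$ with $A \#_t B \succeq T$, set $W = A \#_t B$ and $Z = A \#_{2t-1} B$, both in $\S^n_{++}$. The constraint $(A,W,Z)\in\hypo_{u}$ reads $A \#_u W \succeq Z$ and holds with equality, since $A \#_u (A \#_t B) = A \#_{ut} B = A \#_{2t-1} B = Z$ by \eqref{eq:st}. The middle LMI is equivalent, via Lemma~\ref{lem:base-case}, to $Z \#_{1/2} B \succeq W$, and this too holds with equality because $(A \#_{2t-1} B) \#_{1/2} B = A \#_{\,1/2 + (2t-1) - (2t-1)/2\,} B = A \#_t B = W$ by \eqref{eq:revst}. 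Finally $W = A \#_t B \succeq T$ is exactly the hypothesis, so all three constraints are satisfied.

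For the reverse inclusion I would start from feasible $Z,W \in \S^n_{++}$ and $T$ and aim to show $A \#_t B \succeq T$; since $W \succeq T$, it suffices to prove $A \#_t B \succeq W$. Reading the middle LMI through the Schur complement with respect to the block $B$ gives $Z \succeq W B^{-1} W$, and combining with $A \#_u W \succeq Z$ yields $A \#_u W \succeq W B^{-1} W$. The key manoeuvre is then to apply the congruence property (i) of Lemma~\ref{lem:properties} with $X = W^{-1/2}$ to normalise $W$ to the identity: writing $\hat A = W^{-1/2} A W^{-1/2}$ and $\hat B = W^{-1/2} B W^{-1/2}$, the left-hand side becomes $\hat A \#_u I = \hat A^{1-u}$ and the right-hand side becomes $\hat B^{-1}$, so the inequality reads $\hat A^{1-u} \succeq \hat B^{-1}$. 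Since $1-u = \frac{1-t}{t}$, inverting both sides gives $\hat B \succeq \hat A^{(t-1)/t}$. Now monotonicity (property (ii), valid because $t \in [0,1]$) gives $\hat A \#_t \hat B \succeq \hat A \#_t \hat A^{(t-1)/t}$, and the right-hand mean is a function of the single matrix $\hat A$, hence equals $\hat A^{1-t}\hat A^{t-1} = I$. Undoing the congruence by $W^{1/2}$ turns $\hat A \#_t \hat B \succeq I$ into $A \#_t B \succeq W \succeq T$, as required.

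I expect the reverse inclusion to be the main obstacle: for an arbitrary feasible pair $(Z,W)$ the two matrix inequalities do not combine transparently, and a naive elimination of $Z$ leaves $W$ on both sides of a nonlinear matrix relation. The decisive step is the congruence that sends $W \mapsto I$, after which the problem collapses to a commuting (indeed scalar-functional) computation in which the exponent bookkeeping $1-u = (1-t)/t$ does all the work. Throughout I would keep careful track that $u \in [0,1]$, so that the congruence and monotonicity facts, stated only for parameters in $[0,1]$, genuinely apply.
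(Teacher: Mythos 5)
Your proof is correct, and the easy inclusion coincides exactly with the paper's (same witnesses $W = A\#_t B$, $Z = A\#_{2t-1}B$, verified with \eqref{eq:st}, \eqref{eq:revst} and Lemma~\ref{lem:base-case}). For the hard inclusion the mechanics differ in an interesting way. The paper rewrites $A \#_{\frac{2t-1}{t}} W = W \#_{\frac{1}{t}-1} A$ via \eqref{eq:rev}, chains $W \#_{\frac{1}{t}-1} A \psd Z \psd W \#_{-1} B$, and then invokes the order-reversal equivalence \eqref{eq:ineq-equiv} of Lemma~\ref{lem:properties}(iv) as a black box to obtain $B \psd A \#_{1/t} W$, finishing with monotonicity: $A \#_t B \psd A \#_t (A \#_{1/t} W) = W \psd T$. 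You instead conjugate the whole chain by $W^{-1/2}$, reducing it to the power inequality $\hat A^{1-u} \psd \hat B^{-1}$, invert by hand to get $\hat B \psd \hat A^{(t-1)/t}$, and apply monotonicity in the normalized frame before undoing the congruence. These are literally the same argument up to conjugation: your inequality $\hat B \psd \hat A^{(t-1)/t}$ is the image of the paper's $B \psd A \#_{1/t} W$ under congruence by $W^{-1/2}$, and the congruence-then-invert manoeuvre you perform is precisely how the paper proves property (iv) in the first place. So your route inlines the proof of Lemma~\ref{lem:properties}(iv) rather than citing it; this buys self-containedness (you only need properties (i), (ii) and the base case, and the final step collapses to a commuting computation), at the cost of redoing work the paper has packaged into a reusable lemma. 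One small bookkeeping point in your favor: you explicitly track that both parameters $u = \frac{2t-1}{t}$ and $t$ lie in $[0,1]$, which is needed since properties (i) and (ii) are only stated in that range.
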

\begin{proof}
We first prove $\subseteq$. Suppose $A \#_{t} B \psd T$. Then let $Z = A \#_{2t-1} B$ and $W = A \#_{t} B$. It is easy to see that the conditions on the right-hand side of \eqref{eq:simple2s} are satisfied. Indeed first we have
\[
A \#_{\frac{2t-1}{t}} W = A \#_{\frac{2t-1}{t}} (A \#_t B) = A \#_{2t-1} B = Z
\]
and this shows that $(A,W,Z) \in \hypo_{\frac{2t-1}{t}}$. Second, using Property~\eqref{eq:revst} and Lemma~\ref{lem:base-case} we have, 
\[
Z \#_{1/2} B = (A\#_{2t-1} B)\#_{1/2} B = A\#_t B = W
\quad\text{which implies that}\quad
\begin{bmatrix} Z&W\\W&B\end{bmatrix} \psd 0.
\]
Finally we have that $W = A \#_t B \psd T$ by assumption.

	We now prove $\supseteq$. Suppose there exist $Z,W\in \S_{++}^n$ such that $A \#_{\frac{2t-1}{t}} W \psd Z$ and $W \#_{-1} B \nsd Z$ and $W \psd T$. 
	Then since $1-\frac{2t-1}{t} = \frac{1}{t}-1$ we have that $W \#_{1/t-1} A \psd Z$. Then
	\[ W \#_{1/t-1} A \psd Z \psd W \#_{-1} B.\]
	Applying~\eqref{eq:ineq-equiv} from Lemma~\ref{lem:properties} it follows that 
	\[B =  B \#_{-1+1} W \psd A \#_{1/t-1+1} W = A \#_{1/t} W.\]
	Then since $t\in [1/2,1]$ and $G_t$ is monotone for $t\in [0,1]$, applying $G_t(A,\cdot)$ to both sides gives 
	\[ A \#_t B \psd A \#_t (A\#_{1/t} W) = A \#_1 W = W \psd T\]
	as required.
	\end{proof}


Note that if $t = 2^{\ell}/q$ then $\frac{2t-1}{t} = \frac{2^{\ell+1} - q}{2^{\ell}}$ is a dyadic number and so $\hypo_{\frac{2t-1}{t}}$ has a SDP description from the previous section (Proposition \ref{prop:denp2-hypo}).
\begin{proposition}
	\label{prop:nump2-hypo}
	Assume $\ell,q$ are integers such that $\frac{2^{\ell}}{q} \in [1/2,1]$. Then
\begin{equation}
\label{eq:SDPnump2}
 \hypo_{2^{\ell}/q} = \left\{(A,B,T): \exists Z,W\;\;\text{s.t.}\;\;(A,W,Z)\in \hypo_{\frac{2^{\ell+1}-q}{2^{\ell}}},\;\; 
	\begin{bmatrix} Z & W\\W & B \end{bmatrix} \psd 0,\;\;W \psd T\right\}.
\end{equation}
Hence $\hypo_{2^{\ell}/q}$ has a SDP representation using $\ell+1$ LMIs of size $2n\times 2n$ and one LMI of size $n\times n$.
\end{proposition}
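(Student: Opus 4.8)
The plan is to combine Lemma~\ref{lem:simple-2} with Proposition~\ref{prop:denp2-hypo} and then count the number and size of the resulting LMIs. The set identity~\eqref{eq:SDPnump2} is essentially immediate: it is exactly the statement of Lemma~\ref{lem:simple-2} specialized to $t = 2^\ell/q \in [1/2,1]$, where I use the observation (already recorded just before the proposition) that $\frac{2t-1}{t} = \frac{2^{\ell+1}-q}{2^\ell}$. So the first step is simply to invoke Lemma~\ref{lem:simple-2} to write $\hypo_{2^\ell/q}$ in the displayed form, noting that $\hypo_{\frac{2^{\ell+1}-q}{2^\ell}}$ now has a dyadic denominator.

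The substantive part is the size count. I first need to verify that $\frac{2^{\ell+1}-q}{2^\ell}$ is a number to which Proposition~\ref{prop:denp2-hypo} applies, i.e.\ that its reduced form has a power-of-two denominator, an odd numerator, and lies in $[0,1]$. Since $2^\ell/q \in [1/2,1]$ we have $2^\ell \le q \le 2^{\ell+1}$, hence $0 \le 2^{\ell+1}-q \le 2^\ell$, so the fraction lies in $[0,1]$. The numerator $2^{\ell+1}-q$ is odd precisely when $q$ is odd; after reducing to lowest terms the denominator is $2^{\ell'}$ for some $\ell' \le \ell$, and Proposition~\ref{prop:denp2-hypo} supplies an SDP description of $\hypo_{\frac{2^{\ell+1}-q}{2^\ell}}$ using $\ell$ LMIs each of size $2n\times 2n$ (the length of the binary expansion, which I should argue is at most $\ell$).

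I then assemble the total: the $\ell$ LMIs of size $2n \times 2n$ coming from the description of $\hypo_{\frac{2^{\ell+1}-q}{2^\ell}}$, plus the one extra $2n\times 2n$ Schur-complement LMI $\begin{bmatrix} Z & W\\ W & B\end{bmatrix} \psd 0$ introduced by Lemma~\ref{lem:simple-2}, gives $\ell+1$ LMIs of size $2n\times 2n$. The remaining constraint $W \psd T$ is a single LMI of size $n\times n$. This yields the claimed count of $\ell+1$ LMIs of size $2n\times 2n$ and one LMI of size $n\times n$.

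The main obstacle I anticipate is the bookkeeping around the reduced denominator: Proposition~\ref{prop:denp2-hypo} is phrased for $p/2^\ell$ with $p$ odd, and I must be careful that the binary expansion of $\frac{2^{\ell+1}-q}{2^\ell}$ has length at most $\ell$ so the LMI count is exactly $\ell$ and not more. If $q$ is even then $\frac{2^{\ell+1}-q}{2^\ell}$ reduces, shrinking the denominator and hence the number of LMIs, so the bound still holds; the edge cases $q = 2^\ell$ (giving $t=1$) and $q = 2^{\ell+1}$ (giving $t=1/2$) should be checked to lie within the stated sizes. Everything else is the mechanical substitution of the dyadic description into the set~\eqref{eq:SDPnump2}.
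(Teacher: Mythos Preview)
Your proposal is correct and follows essentially the same approach as the paper: invoke Lemma~\ref{lem:simple-2} at $t=2^\ell/q$ to obtain the set identity~\eqref{eq:SDPnump2}, then apply Proposition~\ref{prop:denp2-hypo} to the dyadic parameter $\frac{2^{\ell+1}-q}{2^\ell}$ and count LMIs. If anything, your bookkeeping on the range of the dyadic fraction, the reduced denominator, and the edge cases is more careful than what the paper records.
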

\begin{proof}
The SDP description follows directly from Lemma \ref{lem:simple-2} with $t = \frac{2^{\ell}}{q}$.
Since $\hypo_{\frac{2^{\ell+1}-q}{2^{\ell}}}$ has a SDP description with $\ell$ LMIs of size $2n \times 2n$ (cf. Proposition \ref{prop:denp2-hypo}) the conclusion about the size of the description \eqref{eq:SDPnump2} holds.
\end{proof}	
We conclude with an example in which the numerator is a power of two.
\begin{example}[SDP representation of $\hypo_{8/13}$]
	\label{eg:8over13}
	Let $q=13$ and $\ell=3$ so that $2^{\ell}/q = 8/13$. Note that $8/13\in [1/2,1]$.
	Consider constructing an SDP description of $\hypo_{8/13}$. We have that $(2^{\ell+1}-q)/2^{\ell} = 3/8 = (0.011)_2$. Hence,
	by Proposition~\ref{prop:nump2-hypo},
	\[ \hypo_{8/13} = \left\{(A,B,T): \exists Z_3,W\;\;\text{s.t.}\;\;(A,W,Z_3)\in \hypo_{3/8},\;\;\begin{bmatrix} Z_3 & W\\W & B \end{bmatrix} \psd 0,\;\;
	W \psd T\right\}.\]
	Using Proposition~\ref{prop:denp2-hypo} to obtain a semidefinite description of $\hypo_{3/8}$ gives  
	\begin{align*}
	 \hypo_{8/13} & =\! \left\{(A,B,T): \exists Z_3,W,Z_1,Z_2 \;\text{s.t.}\;
	\begin{bmatrix} A & Z_3\\Z_3 & Z_2\end{bmatrix} \psd 0,\;\begin{bmatrix} W & Z_2\\Z_2 & Z_1\end{bmatrix} \psd 0\;,
	\begin{bmatrix} W & Z_1\\Z_1 & A\end{bmatrix} \psd 0\right.
	\\
	& \qquad\qquad\qquad\qquad\qquad\qquad\qquad\qquad\qquad\qquad\qquad\qquad\qquad\qquad\left.
	\begin{bmatrix} Z_3 & W\\W & B \end{bmatrix} \psd 0,\;\; W \psd T\right\},
	\end{align*}
a SDP representation of $\hypo_{8/13}$ using $\ell+1=4$ LMIs of size $2n\times 2n$ and one LMI of size $n\times n$. 
\end{example}

\subsubsection{Putting everything together and summary of construction}
\label{sec:summary}

We now complete the proof of Theorem \ref{thm:mgeomean-sdp}.

\begin{proof}[Proof of Theorem \ref{thm:mgeomean-sdp}]

First observe that, using relations established in Lemma \ref{lem:epihyporel}, we only need to consider the case $p/q \in [0,1/2]$: indeed if we have an SDP representation of $\hypo_{t}$ for $t \in [0,1/2]$ then we can use the relationship between $\hypo_{1-t}$ and $\hypo_t$ in \eqref{eq:hyporev} to get an SDP representation for $\hypo_t$ in the range $t \in [1/2,1]$ with no additional LMIs. Then using the relationship \eqref{eq:negt} between $\epi_{-t}$ and $\hypo_t$ we can get an SDP representation of $\epi_t$ for $t \in [-1,0]$ with the addition of a single $2n \times 2n$ LMI. Finally using again the relationship \eqref{eq:epirev} between $\epi_{1-t}$ and $\epi_{t}$ we get an SDP representation for $\epi_t$ where $t \in [1,2]$.

It thus remains to prove the case where $t$ is an arbitrary rational in $[0,1/2]$. We show how to do this using the results from the two previous sections. If $t = p/q \in [0,1/2]$ we decompose $t$ as $t = (p / 2^{\ell}) \cdot (2^{\ell} / q)$ where $\ell = \lfloor \log_2(q) \rfloor$. By applying Propositions \ref{prop:denp2-hypo} and \ref{prop:nump2-hypo} to construct respectively $\hypo_{p/2^{\ell}}$ and $\hypo_{2^{\ell}/q}$ and appealing to \eqref{eq:simple-1} we get an SDP description of $\hypo_{t}$ (note that $2^{\ell}/q \in [1/2,1]$ since $\ell = \lfloor \log_2(q) \rfloor$ and so Proposition \ref{prop:nump2-hypo} applies to get an SDP description of $\hypo_{2^{\ell}/q}$).

To see that our SDP representation has the right size, the SDP representation of $\hypo_{p/2^{\ell}}$ uses at most $\ell$ LMIs of size $2n\times 2n$ and the SDP representation of $\hypo_{2^{\ell}/q}$ uses at most $\ell+1$ LMIs of size $2n\times 2n$ and one LMI of size $n\times n$. Hence our description has at most $2\ell+1 = 2\lfloor \log_2(q) \rfloor + 1$ LMIs of size $2n\times 2n$ and one LMI of size $n\times n$. The size of the SDP representation for the epigraph case $t \in [-1,0] \cup [1,2]$ requires an additional $2n \times 2n$ LMI which comes from identity \eqref{eq:negt}.
\end{proof}



Table \ref{tbl:summaryconstruction} summarizes our SDP construction of the hypograph/epigraph of the matrix geometric mean for arbitrary rationals $t=p/q \in [-1,2]$.


\begin{table}[ht]
\centering
\fbox{
\begin{minipage}{12cm}
\textbf{Semidefinite representation of $\hypo_{t}$ for $t=p/q \in [0,1]$}
\begin{itemize}[leftmargin=0.8cm]
\item[(i)] If $q$ is a power of two\\
{\small Use construction in Proposition \ref{prop:denp2-hypo}.}
\item[(ii)] If $t \in [1/2,1]$ and $p$ is a power of two\\
{\small Use Proposition \ref{prop:nump2-hypo} which expresses $\hypo_{t}$ in terms of the hypograph of a dyadic number, then use (i).}
\item[(iii)] If $t$ is any rational in $[0,1/2]$\\
{\small Express $t$ as $t = (p/2^{\ell}) \cdot (2^{\ell}/q)$ where $q = \lfloor \log_2(q) \rfloor$. Use (i) and (ii) to construct $\hypo_{p/2^{\ell}}$ and $\hypo_{2^{\ell}/q}$ and combine them using \eqref{eq:simple-1} to get $\hypo_t$.}
\item[(iv)] If $t$ is any rational in $[1/2,1]$\\
{\small Use relationship \eqref{eq:hyporev} between $\hypo_{t}$ and $\hypo_{1-t}$ then apply (iii).}
\end{itemize}
\end{minipage}
}

\medskip

\fbox{
\begin{minipage}{12cm}
\textbf{Semidefinite representation of $\epi_{t}$ for $t=p/q \in [-1,0]\cup [1,2]$}
\begin{itemize}[leftmargin=0.8cm]
\item[(i)] If $t \in [-1,0]$\\
{\small Use \eqref{eq:negt} to express $\epi_t$ in terms of $\hypo_{-t}$ and apply box above.}
\item[(ii)] If $t \in [1,2]$\\
{\small Use relationship \eqref{eq:epirev} between $\epi_{t}$ and $\epi_{1-t}$ then apply (i).}
\end{itemize}
\end{minipage}
}
\caption{Semidefinite representation of the matrix geometric mean (Theorem \ref{thm:mgeomean-sdp}).}
\label{tbl:summaryconstruction}
\end{table}

\bibliography{sdp_rational_lieb}
\bibliographystyle{alpha}

\end{document}